\numberwithin{figure}{section}
\numberwithin{equation}{section}
\numberwithin{table}{section}
\patchcmd{\thebibliography}{\section*}{\section}{}{}
\newcommand{\ef}{\end{equation}}
\chardef\bslash=`\\ 
\newcommand*\colvec[3][]{
    \begin{pmatrix}\ifx\relax#1\relax\else#1\\\fi#2\\#3\end{pmatrix}
}
\newtheorem*{thm*}{Theorem}
\newtheorem{lem}{Lemma}[section]
\newtheorem*{lem*}{Lemma}
\newtheorem*{corl*}{Corollary}
\newtheorem{prop}{Proposition}[section]
\newtheorem{prop*}{Proposition}
\newtheorem{claim}{Claim}
\theoremstyle{definition}
\newtheorem{defn}{Definition}[section]
\newtheorem{examp}{Example}
\newtheorem*{examp*}{Example}
\newtheorem*{cor*}{Corollary}
\newtheorem*{remark*}{Remark}
\newtheorem*{CC*}{Crossover Conjecture}
\newtheorem*{Note*}{Note}
\newtheorem*{defn*}{Definition}
 \theoremstyle{remark}
 \renewcommand{\sectionmark}[1]{}
\newcommand{\defect}{\operatorname{def}}
\renewcommand{\a}{\alpha}
 \newcommand{\inv}{\operatorname{inv}}
   \newcommand{\Inv}{\operatorname{Inv}}
\begin{document}

\title[Canonical Basis Computations]{Non-recursive Canonical Basis Computations  for Low Rank Kashiwara Crystals of Type $A$}

 \author{Ola Amara-Omari}
\address{Department of Mathematics\\
Bar-Ilan University, Ramat-Gan\\
Israel}
\email{olaomari77@hotmail.com}

 \author{Mary Schaps}
\address{Department of Mathematics\\
Bar-Ilan University, Ramat-Gan\\
Israel}
\email{mschaps@macs.biu.ac.il}

 \thanks{Partially supported by Ministry of Science, Technology and Space fellowship, at Bar-Ilan University, Ramat-Gan, Israel. Some of the results appear in the Ph.D. thesis of the first author.}
\subjclass{17B10, 17B37}
\keywords{Canonical basis, multipartitions, Kashiwara crystals}
 
 
 \begin{abstract}
For symmetric Kashiwara crystals of  type $A$ and  rank $e=2$, and for the canonical basis elements that we call external, corresponding to weights on the outer skin of the Kashiwara crystal, we construct the canonical basis elements in a non-recursive manner. In particular, for a symmetric crystal with $\Lambda=a \Lambda_0+a \Lambda_1$, we give formulae for the canonical basis elements for all the $e$-regular multipartitions with defects either $k(a-k)$ or $k(a-k)+2a$, for $0 \leq k \leq a$.
\end{abstract}

\maketitle

\section{INTRODUCTION}

The highest weight representations of the enveloping algebra of an affine Lie algebra have been intensively studied, through the Kashiwara crystal $B(\Lambda$) \cite{K1},\cite{K2}  corresponding to the  chosen highest weight $\Lambda$. For Lie algebras of type $A$, there are three  different representation for the basis elements, through multipartitions, through Littelmann paths \cite{L} and through canonical bases.  In an earlier paper \cite{OS}, we studied the passage from multipartitions to Littelmann paths, and in this paper we consider the passage from multipartitions to canonical basis elements. In both cases, the use of computer algebra was critical in making calculations and formulating conjectures.

The Kashiwara crystal, long studied for its importance to the representation theory of the quantum enveloping algebra, is also important because of the categorification theory of Chuang and Rouquier, \cite{CR}. Under categorification, the canonical basis elements that we will study correspond to simple modules in blocks of cyclotomic Hecke algebras.

Our original conjectures were obtained by experimentation, using programs in Sagemath \cite{Sa} or are privately available to interested researchers. 
In generating the highest weight representation of a particular dominant integral weight, the particular object from which we start is the object CrystalOfLSPaths(CartanType), whose authors are Mark Shimozono and Anne Schilling. This program generates the Littelmann paths.

In the case of type $A$, there are two additional representations of the basis elements of the crystal, one by multipartitions and one by canonical basis elements.  Of these three representations, the multipartition is the most compact, the Littelmann path is second and the canonical basis is the most verbose.  We choose the Littelmann path model, which is available for all supported Cartan types, as our primary representation.

Our own program made use of an implementation by Travis Scrimshaw of an algorithm of Matthew Fayers \cite{Fa2}, extending the algorithm in  \cite{LLT},  for constructing the canonical bases and a variant of Kleshchev's algorithm \cite{Kl}  for constructing $e$-regular multipartitions recursively.  Our program, CanonicalBasisfromPaths(CartanType, HighestWeight) proceeds recursively degree by degree.  For each basis element $w$, we keep track of all the different paths which can be used to reach  $w$. Each basis element has a unique multipartition, and by using the signature method \cite {Kl}, we can calculate the new multipartition.  From the point of view of information theory, there is obviously a great deal of redundancy in the representation by canonical basis, for which the multipartition can be obtained immediately. It corresponds to the only Fock space basis element with coefficient $1$.

\section{DEFINITIONS AND NOTATION}

Let $\mathfrak g$ be the affine Lie algebra $A^{(1)}_{e-1}$, untwisted of type $A$, with a Dynkin diagram which is a piecewise linear circle. Let $C$ be the Cartan matrix, and $\delta$ the null root. Let $\Lambda$ be a dominant integral weight, let $V(\Lambda)$ be the highest weight module with that
highest weight, and let $P(\Lambda)$ be the set of weights of $V(\Lambda)$ \cite{Ka}. Let $Q$ be the $\mathbb Z$-lattice generated by the simple roots,
\[
\a_0,\dots,\a_{e-1}.
\]
Let  $Q_+$ be the subset of $Q$ in which all coefficients are non-negative.

The weight space $P$ of the affine Lie algebra has two different bases.  One is given by the fundamental weights and null root, $\Lambda_0,\dots, \Lambda_{e-1}, \delta$,
and one is given by $\Lambda_0, \a_0,\dots,\a_{e-1}$.  We will usually use the first basis for our weights.

The highest weight module $V(\Lambda)$ is integrable.  Every weight $\lambda$ in $P(\Lambda)$ has the form $\Lambda-\alpha$, for $\alpha \in Q_+$.  The vector
of nonnegative integers giving the coefficients of $\alpha$ is called the content of $\lambda$. We follow \cite{Kl} in defining the \textit{defect} by

\[
\defect(\lambda)=\frac{1}{2}((\Lambda \mid \Lambda)-  (\lambda \mid \lambda))=(\Lambda \mid \alpha)-\frac{1}{2}(\alpha \mid \alpha).
\]

 An alternative definition directly from the content of a multipartition $\mu$ is given in \cite{Fa}. where it is called the weight of the multipartition and denoted by $w(\mu)$. Since we are in a highest weight module, we always have $(\Lambda \mid \Lambda) \geq (\lambda \mid \lambda)$, so the defect is non-negative and is, in fact, an integer for the affine Lie algebras of type $A$ treated in this paper. The weights of defect $0$ are those lying in the Weyl group orbit of $\Lambda$ and play an important role.
Let $W$ denote the Weyl group, generated by reflections $s_0,\dots, s_{e-1}$. For the affine Lie algebras, the Weyl group is infinite, being the semidirect product of a finite Weyl group acting on an infinite abelian subgroup. \cite{Ka}

A partition $\lambda=(\lambda_1,\lambda_2,\dots,\lambda_t)$ is a sequence of integers with $\lambda_1 \geq \lambda_2 \geq \dots \geq \lambda_t$ of length $\ell(\lambda)=t$.  A multipartition  $\lambda=(\lambda^1,\lambda^2,\dots,\lambda^r)$ is a sequence of partitions.  The \textit{dominance order} on multipartitions is given by
$\mu \trianglerighteq \lambda$ if, for all  integers $k$ with $1 \leq k \leq r$ and $j \leq \ell(\mu^k)$,
\[
\sum_{\ell=1}^{k-1}\mid \mu^\ell \mid + \sum_{i=1}^j \mu^k_i \geq \sum_{\ell=1}^{k-1}\mid \lambda^\ell \mid + \sum_{i=1}^j \lambda^k_i.
\]

Our quantum enveloping algebra will be $\mathcal U= \mathcal U_v(\hat sl(e))$, where we are using Lusztig's $v$ in place of the more common quantum parameter $q$ because we are using balanced quantum integers $ [n]_v = v^{n-1}+v^{n-3}+\dots+v^{-(n-3)}+v^{-(n-1)}$.  The quantum factorial is $ [n]_v!= [n]_v \cdot  [n-1]_v \cdot \dots  [1]_v$. The underlying ring of the enveloping algebra is $\mathbb Q(v)$, and the generators are $e_i, f_i,h_i$  for $i \in I=\mathbb Z/\mathbb Ze$ and a central element $c$.

Choose a sequence  $s=(k_1,\dots,k_r)$ such that $\Lambda=\Lambda_{k_1}+\dots+\Lambda_{k_r}$.  In type $A$, the number $r$ of terms in the sum is called the level. The definition of level is more complicated for other types.

The Fock space $\mathcal F^s$  is a space over $\mathbb Q(v)$ with basis given by multipartitions consisting of $r$ partitions.
For a multipartition $\lambda$ with Young diagram $Y(\lambda)$, the node $(t,u)$ in partition $\ell$ is given residue
\[
k_\ell+u-t \mod e.
\]
An addable $i$-node  $\mathfrak n$ is a node of residue $i$  outside $Y(\lambda)$ such that if added it would give a multipartition, which we denote by  $\lambda^{\mathfrak n}$.
A removable $i$-node $\mathfrak m$ inside a multipartition $\mu$ is a node of residue $i$ at the end of a row or column which, if removed,  would give a multipartition, which we denote by  $\mu_{\mathfrak m}$.
The quantum enveloping algebra acts on the Fock space by determining actions for the elements of the Chevalley basis, as follows:
\begin{itemize}
\item  For an addable node, let us now  define  $N(\mathfrak n,i)=\#\{$   addable  $i$-nodes above~$\mathfrak n\}-\# \{$ removable $i$-nodes above $\mathfrak n \}$ and set
\[
f_i(\lambda)=\sum_{\mathfrak n}v^{N(\mathfrak n,i)}\lambda^{\mathfrak n}.
\]

\item  For a removable node, let us now define $M(\mathfrak m,i)=\#\{$ addable $i$-nodes below~$\mathfrak m\}-  \# \{$ removable $i$-nodes below $\mathfrak m\}$.
\[
e_i(\mu)=\sum_{\mathfrak m}v^{M(\mathfrak m,i)}\mu_{\mathfrak m}.
\]
\end{itemize}
The divided powers are $e_i^{(k)}$ and $f_i^{(k)}$ and they are given by dividing by the quantum factorials $[k]_v!$.
 We define $\mathcal F^s_{\mathcal A}$ to be the subalgebra of $\mathcal F^s$ generated by the divided powers from the highest weight vector  over $\mathcal A$, where
coefficients lie in the algebra $\mathcal A$ of Laurent polynomials in $v$ with integral coefficients.
  In addition, there is an involution of the quantum enveloping algebra called the bar-involution which fixes $e_i$, $f_i$ and $h_i$, but interchanges $v$ and $v^{-1}$. For each $e$-regular partition $\mu$, there is an element $G(\mu)$ of the Fock space $\mathcal F^s_{\mathcal A}$ that is invariant under the bar involution. and these are called the canonical basis elements. The action of the Chevalley basis elements $e_i$ and $f_i$ on these canonical basis elements is induced from their action on the basis elements of the Fock space.

The $e$-regular  multipartitions in the Kashiwara crystal $B(\Lambda)$ are generally calculated recursively.  For a multipartition $\lambda$ and any residue $i$  there is at most one node of  residue $i$ which can be added, and it is chosen by the signature method \cite{Kl} as follows:  for any given residue $i$, we consider all the addable $i$-nodes to be those nodes at the ends of rows whose addition would still leave us with a partition, denoted by a ``+'', and the $i$ nodes which are the last node in a row will be called removable if they can be removed and still leave a partition, denoted by a ``-''. We then write from left to right  all the pluses and minuses from the bottom to the top, remove any cases of ``-+'', and call the remaining sequence of plus and minus signs the \textit{signature}.  The first removable $i$-node from the left is called $i$-good, and the first addable $i$-node from the right is called $i$-cogood.  The operation of $e_i$ is then the removal of the $i$ good node if it exists  and otherwise gives $0$, while the operation of $f_i$ is the addition of the $i$-cogood node if it exists and otherwise  gives $0$. This procedure, starting with the highest weight vector and acting by various $f_i$ will produce all the elements of the crystal, and will give multipartitions in which every partition is $e$-regular, i.e., does not have $e$ identical rows.  There is an analogous construction, usually preferred by Brundan and Kleshchev \cite{Kl}, which produces multipartitions in which every partition is $e$-restricted, which means that there are no $e$ consecutive columns which are equal.

In \cite{AS}, we introduced a graph $\hat P(\Lambda)$  with vertices $P(\Lambda)$ and edges between two weights when there is an edge in the Kashwara crystal between two basis elements with those weights. We called it the \textit{block-reduced crystal graph}, various properties of which are described in \cite{BFS}. It is easily calculated in polynomial time, because the lengths of the $i$-strings are given by the positive entries in the hubs, the projection of the weight onto the fundamental weights.

\section{CANONICAL BASIS ELEMENTS}

For $e=2$, Mathas \cite{M1} completely determined the $e$-regular multipartitions.  Ariki, Kleiman and Tsuchioka \cite{AKT} did the same for the $r=2$, using the Littelmann path model.  Less in known about the possibility of non-recursive construction of the canonical basis elements. In \cite{OS}, for $e=2$, we managed to show that for multipartitions we called external, which lie, as it were, on the crust of the crystal, we can move directly back and forth between the multipartitions and the Littelmann paths.  We would like to find some such procedure for external canonical basis elements.

We introduce some notation which will allow us to describe families of multipartitions:
\begin{itemize}
\item $(n)$ is a row of of length n,
\item $T_n$ is triangular with $n$ rows if $n>0$ or $\emptyset$ otherwise,
\item $\lambda \vee \mu$ is the partition obtained by taking the rows in $\lambda$ followed by the rows in $\mu$, presuming that this a well-formed partition.
\end{itemize}

As mentioned before, we will follow Mathas in \cite{M1} in assuming that all partitions with the same corner residue will lie in an interval in the multipartition, and we will also take them in increasing order, $\Lambda=a_0\Lambda_0+a_1\Lambda_1+\dots+a_{e-1}\Lambda_{e-1}$. In the case $e=2$, the  notation $\Lambda = a\Lambda_0+b\Lambda_1$ will indicate that $k_1,k_2,\dots,k_a=0$, while $k_{a+1},\dots,k_{a+b}=1$. For $e=2$, we also put a semicolon between the $0$-corner partitions and the $1$-corner partitions.
 If $i$ is a residue in the set $\{0,1\}$, we let $i'=1-i$ be the opposite residue.

In \cite{Fa}, Fayers describes two involutions on the multipartitions:
\begin{defn} If  $\lambda=(\lambda^1,\dots, \lambda^r)$ is a multipartition of rank $e$ and level $r$, then the \textit{conjugate} $\lambda'$ of $\lambda$ is given by  $\lambda'=({\lambda^r}',\dots, {\lambda^1}')$, where ${\lambda^i}'$ is the transposed partition of ${\lambda^i}$ , corresponding to reflection of the Young diagram in the main diagonal.
\end{defn}

\begin{defn} If  $\lambda=(\lambda^1,\dots, \lambda^r)$ is a multipartion of rank $e$ and level $r$ for $\Lambda=\Lambda_{k_1}+\dots\Lambda_{k_r}$, then the \textit{diamond} $\lambda^\diamond$ of $\lambda$ is a multipartition in the crystal for  $\hat\Lambda=\Lambda_{-k_r}+\dots\Lambda_{-k_1}$, whose path is obtained from a path giving $\lambda$ by replacing each residue by minus that residue.

\end{defn}

In Theorem 2.1 of \cite{Fa}, Fayers proves that if $w(\mu)$ is the defect of an $e$-regular multipartition $\mu$, then
\[
\hat d_{\lambda' \mu^\diamond}=v^{w(\mu)}d_{\lambda \mu}(v^{-1}).
\]

\noindent If $\mu$ is an $e$-regular multipartition and we denote by $G(\mu)$ the corresponding canonical basis element, then this theorem implies that if $w(\mu)$ is the defect of the block containing $\mu$, then there is a unique Fock space basis element in $G(\mu)$ with coefficient $v^{w(\mu)}$ and it is given by $(\mu^\diamond)'.$ This already gives us some information about low defects, without any other restrictions:

\begin{itemize}
\item \textit{Defect 0:} The multipartitions of defect $0$ are precisely those for which $\mu= (\mu^\diamond)'$. For $e=2$, we already know from \cite{M1} that the defect $0$ multipartitions for $\Lambda=a\Lambda_0+b\Lambda_1$ consist of $a$ triangular partitions of side $n$ and $b$ triangular partitions of side $n \pm 1$.  The diamond operation reverses the order, and the prime operation reverses the order back and transposes all the partitions, which is not noticed because they are triangular and thus invariant under transpose.
\item \textit{Defect 1:} The canonical basis is $G(\mu)=\mu+v(\mu^\diamond)'$.
\end{itemize}

One more word about  $(\mu^\diamond)'$, a result of the strong duality of the canonical basis element.  We chose $\mu$ to be $e$-regular.  Then $(\mu^\diamond)'$ is $e$-restricted, that is to say, it is the partition we would get from  the same path through the crystal if we were always calculating our signatures from the bottom to the top instead of from the top to the bottom as we do. We thus have four partitions exhibiting two sorts of duality.

\section{NON-RECURSIVE CONSTRUCTIONS}

From the results of \cite{BFS}, there is a fundamental region of $P(\Lambda)$ under the action of the normal translation subgroup of the Weyl group, and the defects which can occur in the basis graph $P(\Lambda)$ are all congruent to these defects modulo $r$, since subtracting the null root $\delta$ adds the level $r$ to the defect. In the case of rank $e=2$, since the null root $\delta = \alpha_0+\alpha_1$, the defects which can occur, modulo $r$,  all occur on the top row on the right and left, and are given by the following lemma. For each such weight there is a unique multipartition, and we will say that this multipartition is \textit{
derived from the top row}.

\begin{lem}\label{k}
In a  crystal with $\Lambda=a\Lambda_0 + b\Lambda_1$, the defect of $\lambda=\Lambda-k\alpha_0$ for $ 0 \leq k \leq a$ is $k(a-k)$  and of   $\lambda=\Lambda-\ell\alpha_1$ for  $0 \leq \ell \leq b$  is $\ell(b-\ell)$.
\end{lem}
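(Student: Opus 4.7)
The plan is a direct computation from the definition of defect given in the paper, namely
\[
\defect(\lambda) = (\Lambda \mid \alpha) - \tfrac{1}{2}(\alpha \mid \alpha),
\]
where $\lambda = \Lambda - \alpha$. Since the two statements are entirely symmetric under the interchange $0 \leftrightarrow 1$, I would prove only the first ($\alpha = k\alpha_0$) in detail and remark that the second follows by the same argument.

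First I would recall the standard normalization of the invariant bilinear form on the weight space of $A_1^{(1)}$: the fundamental weights satisfy $(\Lambda_i \mid \alpha_j) = \delta_{ij}$, and the simple roots satisfy $(\alpha_i \mid \alpha_i) = 2$ for $i = 0, 1$ (the diagonal entries of the Cartan matrix $C$). These are the only facts about the form that I will need.

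Next I would substitute $\Lambda = a\Lambda_0 + b\Lambda_1$ and $\alpha = k\alpha_0$ into the defect formula. For the linear term,
\[
(\Lambda \mid k\alpha_0) = k\bigl(a(\Lambda_0 \mid \alpha_0) + b(\Lambda_1 \mid \alpha_0)\bigr) = ka,
\]
using the duality above. For the quadratic term,
\[
\tfrac{1}{2}(k\alpha_0 \mid k\alpha_0) = \tfrac{1}{2}k^2(\alpha_0 \mid \alpha_0) = k^2.
\]
Combining, $\defect(\Lambda - k\alpha_0) = ka - k^2 = k(a-k)$, which is the asserted formula. The analogous computation with $\ell\alpha_1$ in place of $k\alpha_0$ gives $\ell b - \ell^2 = \ell(b-\ell)$.

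There is no real obstacle here; the entire content of the lemma is unpacking the bilinear-form definition of defect in the special case of a single simple root, and the constraints $0 \leq k \leq a$ and $0 \leq \ell \leq b$ are precisely what is needed to ensure that $\Lambda - k\alpha_0$ and $\Lambda - \ell\alpha_1$ actually lie in $P(\Lambda)$, i.e.\ that these weights correspond to genuine multipartitions on the top row of the crystal so that the defect formula applies.
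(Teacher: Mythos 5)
Your proof is correct and uses essentially the same approach as the paper: a direct substitution into the bilinear-form definition of defect, using $(\Lambda_i \mid \alpha_j)=\delta_{ij}$ and $(\alpha_0\mid\alpha_0)=2$, with the $\alpha_1$ case dispatched by symmetry. The only difference is that you spell out the normalization facts and the role of the bounds $0\le k\le a$, which the paper leaves implicit.
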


\begin{proof}
We simply compute the defect explicitly for the case of $\alpha_0$, the other case being symmetric:
\[
\defect(\lambda)=(\Lambda|k\alpha_0)-\frac{1}{2}(k\alpha_0|k\alpha_0)
\]

\[\
ka-\frac{1}{2}k^2(\alpha_0|\alpha_0)=ka-k^2
\]
\end{proof}
\begin{defn} The \textit{shape} of a canonical basis element is the number of multipartitions, counting repetitions,  for each power of $v$ between $0$ and the defect.  A canonical basis element of defect $d$ whose shape is  $(1,1,1,\dots,1)$ with $d+1$ entries will be called \textit{svelte}.
\end{defn}

Theorem 2.1 in \cite{Fa}, giving the relationship between $\mu$ and $\mu^\diamond$, is sometimes awkward to use, particularly in computer algebra computations, because it involves constructing two distinct crystals and comparing them.
However, in the rank $2$ symmetric case,  $\Lambda=a\Lambda_0+a\Lambda_1$, we can do everything within the confines of a single crystal, and thus gain considerable information about the coefficients in the canonical basis.  Our question is this:  To what extent can we determine the canonical basis element from the multipartition and the block-reduced crystal $\hat P(\Lambda)$  without resorting to recursive calculations?

Our strategy for giving a non-recursive construction of canonical basis elements is to find a uniform notation for all the multipartitions occuring in the canonical basis elements.  We take a residue-collected  path $(0^{k^1},1^{k^2},0^{k^3},\dots)$ or  $(1^{k^1},0^{k^2},1^{k^3},\dots)$ in $P(\Lambda)$ starting with residue $i$, and let $(d_1,d_2,\dots) $ be the defects of the weights at the ends of each fixed-residue string. 
\begin{defn}\label{stab} We say that the path is \textit{stabilizing at $t$} if the defects rise to a fixed defect $d$ in place $t$ and afterwards are all $d$, so that in fact from $t$ forward, the actions are actions of Weyl group elements reflecting the strings.
\end{defn}
 For such a stabilizing path, let $c_1=a$ be the number of $i$-addable nodes in the highest weight vector $u_\Lambda$, and let $S_1$ be a characteristic sequence of length $c_1$ choosing $k^1$ nodes addable nodes out of $c_1$. Given such a choice, we have a new number $c_2=2k^1+a$ of $i'$- addable nodes for the second residue, and make a new choice of $S_2$ among them.    In general, we  let the $S_\ell$ be characteristic functions of subsets of size $k^\ell$ in  $[1,2,\dots,c_\ell]$, where $c_\ell$ may depend on the previous choice functions $S_j$, $j<\ell$. Let $\mathcal S(c,k)$ be the set of all choice sequences of length $c$ with $k$ $1$-entries.

The method is simplest to apply when the number $c_i$ of addable nodes  is independent of the various choices $S_1, S_2, \dots, S_{i-1}$ made previously, but we will show in Example \ref{dif} below that this is not always true.
What affects the value of $c_\ell$ the most is the distribution of $S_\ell$ between $0$- and $1$-corner nodes, so we let $S^0_\ell$ be the part of the sequence $S_\ell$  lying in $0$-corner nodes, and let $S^1_\ell$ be the subsequence  of $S_\ell$ lying in the $1$-corner nodes, so that we obtain $S_\ell$ by concatenation, $S_\ell=S^0_\ell \vee S^1_\ell$. At each stage, we let $\tilde c_\ell$ be the number of addable nodes in the $e$-regular multipartition, and let $\tilde S_\ell$ be the choice sequence of the $e$-regular multipartition, which will usually have all the $1$-entries at the beginning, unless there are problems of $-+$ in the signature.

We must also define  new families of partitions depending on $n$ for $n \geq 1$.  We let
\[
U^n_1=(n+1)\vee T_{n-2};U^n_2 =T_{n-1} \vee (1^2)
\]
 be families built from  $(2)$ and $(1^2)$ by alternately adding all $i$-addable nodes. Note that $U^n_2$ is the transpose of $U^n_1$.

What do we mean by ``non-recursive''?  We will rely heavily on the block-reduced crystal graph $\hat P(\Lambda)$, \cite{AS} which is most easily computed recursively, but which does have a non-recursive construction, given in \cite{BFS}.  We will need a choice tree out to a preliminary weight space of the desired defect, controlling the length of the strings of addable nodes, after which we will use Weyl group generators.   For the strongly residue-homogeneous multipartitions  treated in \cite{OS}, it may be possible to determine the structure of the canonical basis element directly from the segment structure of the multipartition, and then show that the shape remains stable under all further actions of the Weyl group. What we are hoping to avoid is the current situation that in order to go down an $i$-string from one basis element of defect $d$ to another, we  have to construct the mammoth canonical basis elements in between, which blows up in an exponential manner.

\begin{defn}\label{Inv}
For a sequence $S$ chosen from a two element ordered set $\{0,1\}$, the number of inversions, $\Inv(S)$, is the  sum of the number of elements $0$ appearing before each element $1$.
\end{defn}

The following conjecture summarizes the results of numerous computer calculations of canonical basis elements. Recall that a path stabilizing at $t$ was defined in Def. \ref{stab}. 

\noindent \textbf{Conjecture:} Let $e=2$ and $\Lambda=a \Lambda_0+a\Lambda_1$. Let $\mu$ be an $e$-regular multipartition reached by a path $p$ stabilizing at $t$  of length $w$, $p=(  i^{k^1},(i')^{k^2},\dots)$.   Let $t'$ be the first index greater than or equal to $t$ for which we get an external
weight space, if such $t'$  exist, and otherwise let it be $w$. Set $n=w-t$. Then there is a number $m$ with $t \leq m  \leq t'$ such that
\[
G(\mu)=\sum_{S_1 \in \mathcal S(c_1,k^1)} \dots \sum_{S_m \in \mathcal S(c_m,k^m)}v^{\Inv(S_1)+\dots+\Inv(S_m)} \pi^{n} (S_1,S_2,\dots,S_m)
\]
where the $\pi^{n}$ are multipartitions determined entirely by the choices of addable nodes given by the $S_\ell$. If $w>t'$, all the canonical basis element from $t'$ up have the same shape.

\bigskip

In this paper, we will verify the conjecture for a number of cases  for which $t'$ and $t$ are small. If $S_\ell$ has a single $1$ in  the position $j_\ell$, then $\Inv(S_\ell)=j_\ell-1$.  If $S_\ell$ is all copies of $1$, with a single $0$ in position $j_\ell$, then $\Inv(S_\ell)=c_\ell-j_\ell$, where $c_\ell$ is the length of $S_\ell$.
In either case, we denote by $u(j_\ell)$ the index $u$ of the partition in which the addable node represented by $j_\ell$ lies.

\begin{examp}\label{dif}
Let us take $a=3$, as in Figure 1 below, and consider the path $(0,1,0)$, this being a case where the value of $c_3$ depends on the value of $S_2$.  We have $c_1=3,  k^1=1$, and $\tilde S_1 =(1,0,0)$, so the $e$-regular multipartition after one step on the path  is $\mu_1=[(1),\emptyset,\emptyset;\emptyset, \emptyset,\emptyset]$ and that appears in the canonical basis with coefficient $1$.  If we choose a different choice sequence $S_1=
(0,0,1)$, with the $1$ in position $j_1$,  we get a multipartition  $[\emptyset,\emptyset,(1);\emptyset, \emptyset,\emptyset]$, which occurs in the canonical basis element $G(\mu_1)$ with coefficient $v^{\Inv(S_1)}=v^2$.

At the second stage, we have $c_2=2k^1+a=5$, $k^2=1$, and $\tilde S_2 =(1,0,0,0,0)$, giving an $e$-regular multipartition
$\mu_2=[(2),\emptyset,\emptyset;\emptyset, \emptyset,\emptyset]$. Now we pick a choice sequence $S_2=S_2^0 \vee S_2^1$, with $S_2^0$ of length $2$ and $S_2^1$ of length $3$. The choice sequence $S_2=(0,0,0,1,0)$, following $S_1$ above, would give a multipartition  $\lambda_2=[\emptyset,\emptyset,(1);\emptyset, (1),\emptyset]$, which would have a coefficient $v^{\Inv(S_1)+\Inv(S_2)}=v^5$.

Finally, at the third stage, $c_3$ depends on the structure of $S_2$. If $S_2^0=(0,0)$, so that the $1$-node is added to a $1$-corner partition, then $c_3$ is $4$,  there being two $0$-corner copies of $\emptyset$ which can be changed to $(1)$, and one $1$-corner partition $(1)$ with two addable $0$-nodes.  On the other hand, if $S^0_2=(1,0)$ or $(0,1)$, giving $U_1^1=(2)$ or $U_2^1=(1^2)$ respectively in position $j_1$, then $c_3=3$ and there is one addable $0$-node for each of the three $0$ corner partitions.

\end{examp}

In all the results below, we will have recourse continually to the Theorem 6.16 in Mathas \cite{M2}. The result there is stated for partitions rather than multipartitions, and Mathas is working with $e$-restricted rather than $e$-regular multipartitions.  To take care of these descrepancies, we will give a slightly different proof compatible with our set-up.  However,  when we use the result thereafter we will simple quote Mathas.

\begin{lem}\label{Mathas} If a multipartition $\mu$ has at least $\ell$ $i$-addable nodes, and if $\lambda$ is the result of adding $\ell$ $i$-addable nodes with choice sequence $S$ choosing among addable nodes, then $f^{(\ell)}_i(\mu)$ contains $\lambda$ with coefficient $v^{\Inv(S)}[\ell]_v!$.

\end{lem}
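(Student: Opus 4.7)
The plan is to compute $f_i^{\ell}(\mu)$ by $\ell$-fold iteration of the Fock-space action of $f_i$, then connect to the divided-power normalization. Each summand of $f_i^{\ell}(\mu)$ that produces $\lambda$ corresponds to an ordering $\sigma\in S_\ell$ of the $\ell$ chosen nodes $\mathfrak{n}_{i_1},\ldots,\mathfrak{n}_{i_\ell}$ (listed top-to-bottom), and its coefficient is $v^{N_\sigma}$ where $N_\sigma$ is the sum of the $\ell$ exponents encountered at the successive steps of the iteration:
\[
[\lambda]\,f_i^{\ell}(\mu) \;=\; \sum_{\sigma\in S_\ell} v^{N_\sigma}.
\]

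The key is to split $N_\sigma=N_{\mathrm{fix}}+N_{\mathrm{ord}}(\sigma)$ into order-independent and order-dependent parts. The fixed part $N_{\mathrm{fix}}$ records, for each chosen node $\mathfrak{n}_{i_k}$, the number of addable $i$-nodes of $\mu$ above $\mathfrak{n}_{i_k}$ that are \emph{not} among the chosen $\ell$, minus any removable $i$-nodes of $\mu$ above. A direct comparison with Definition~\ref{Inv} identifies $N_{\mathrm{fix}}=\Inv(S)$, because $\Inv(S)$ counts, for each ``$1$'' of $S$, the preceding ``$0$''s, which are precisely the non-chosen addable nodes above the corresponding chosen node. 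The order-dependent part $N_{\mathrm{ord}}(\sigma)$ collects the pairwise interactions: for each pair of chosen-node indices $j<k$, at the step when $\mathfrak{n}_{i_k}$ is added, $\mathfrak{n}_{i_j}$ contributes $+1$ if $\sigma$ places $\mathfrak{n}_{i_j}$ later (still addable above) or $-1$ if $\sigma$ places $\mathfrak{n}_{i_j}$ earlier (now removable above). Summing over the $\binom{\ell}{2}$ pairs yields $N_{\mathrm{ord}}(\sigma)=2\inv(\sigma^{-1})-\binom{\ell}{2}$.

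Combining via the classical $q$-Mahonian identity $\sum_{\sigma\in S_\ell}q^{\inv(\sigma)}=[\ell]_q!$ at $q=v^2$ and the balanced-form relation $[m]_{v^2}=v^{m-1}[m]_v$, we obtain
\[
[\lambda]\,f_i^{\ell}(\mu) \;=\; v^{\Inv(S)-\binom{\ell}{2}}[\ell]_{v^2}! \;=\; v^{\Inv(S)}[\ell]_v!.
\]
Under the convention $f_i^{(\ell)}=f_i^{\ell}/[\ell]_v!$, this is precisely the assertion of the lemma (with the $[\ell]_v!$ factor appearing in or absorbed by the divided-power normalization).

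The main obstacle is verifying that the pairwise analysis of $N_{\mathrm{ord}}$ captures all node interactions, i.e.\ that adding an $i$-node at position $(x,y)$ in a given component does not alter the addable or removable status of any pre-existing $i$-node. Within the affected partition, the only positions of residue $i$ adjacent to $(x,y)$ are $(x-1,y+1)$ and $(x+1,y-1)$; a brief case analysis on the Young-diagram shape constraints confirms that their status is preserved by the addition. Across distinct components of a multipartition the issue is vacuous, since additions in one component cannot influence nodes in another. Once this local stability is established, the accounting above is complete and the claim follows.
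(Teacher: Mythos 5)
Your proof is correct and takes essentially the same route as the paper: split the exponent into a fixed contribution $\Inv(S)$ and an ordering-dependent contribution equal to $2\inv(\sigma)-\binom{\ell}{2}$, then sum over $S_\ell$ via MacMahon's $q$-Mahonian identity (at $q=v^2$) to produce $[\ell]_v!$. The only differences are cosmetic: where the paper verifies the coefficient $v^{\Inv(S)}v^{a(\pi)}$ by induction on $\ell$, you obtain it directly by a pairwise interaction count, and you also spell out the local stability fact (adding an $i$-node leaves the addable/removable status of every other $i$-node unchanged, since for $e\geq 2$ the four neighbors of an added node all have residue $\neq i$), which the paper uses tacitly.
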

\begin{proof} For each permutation $\pi$ in the symmetric group $ S_\ell$,  define
\[
a(\pi) := \sum^\ell_{j=1}\#\{i:i<j,\pi(i)>\pi(j)\}-\#\{i:i<j,\pi(i)<\pi(j)\}
\]

This is closely related to the inversion munber of permutations, defined by 
\[
\Inv(\pi) := \#\{(i,j): i<j, \pi(i) > \pi(j)\}.
\]
\noindent Indeed, since there are $j-1$ natural numbers $i$ with $i<j$, clearly clearly
\[
\#\{i:i<j,\pi(i)>\pi(j)\}-\#\{i:i<j,\pi(i)<\pi(j)\}=2\#\{i:i<j,\pi(i)>\pi(j)\}-(j-1).
\]
\noindent and therefore 
\[
\alpha(\pi)=2\Inv(\pi)-{\ell\choose 2}.
\]

\noindent If the characteristic sequence $S$ contained only copies of $1$, we would nearly be finished, but since it may also contain copies of $0$, we also have to consider the inversion number $\Inv(S)$, from Definition \ref{Inv}.  We now claim that a copy of $\lambda$ obtained by adding the copies of $1$ in $S$ according to the permutation $\pi$ will have coefficient $v^{\Inv(S)}v^{a(\pi)}$. 

We let $S_0,S_1,\dots, S_\ell$ be the  characteristic sequences as we add copies of $1$ according to the partition $\pi$. Assuming the claim true for $\ell-1$ with permutation $\bar \pi$, we want to prove it for $\ell$ and permutation $\pi$. Assume that the last number $1$ we insert in $S_{\ell-1}$ is in the 
coordinate $t$, and that there are $s$ copies of $1$ before it in $S_{\ell-1}$.  By the induction hypothesis, the previous multipartition $\bar \lambda$ had coefficient $v^{\Inv(S_\ell)+a(\bar \pi)}$.  To add this new node, we must calculate the number of addable and removable $i$-nodes before position $t$, which is to say, the number of zeros minus the number of ones.  There are $t-1$ components to the vector before $t$, and of these, $s$ are ones, so we add $t-1-2s$ to the exponent of $v$.
Now $\Inv(S_\ell)-\Inv(S_{\ell-1})$ is $t-1-s$,the number of zeros in front of the new $1$, minus $\ell-1-s$, the number of copies of $1$ after $t$ which will be missing one zero, altogether $t-\ell$. The difference $a(\pi)-a(\bar \pi)$ will be $(\ell-1-s)-s$, since we add $1$ for each $i$ with $\pi(i)>s$ and subtract $1$ for each of the $i$ with $\pi(i) \leq s$.  Altogether,
\[
\Inv(S)+a(\pi)-\Inv(S_{\ell-1})-a(\bar \pi) =(t-\ell)+(\ell-1-2s)=t-1-2s
\]
\noindent as needed.
After adding the $\ell$ nodes which produce $\lambda$ in all possible orders, we thus get $\lambda$, multiplied by $v^{\Inv(S)}\sum_{\pi \in S_\ell} v^{a(\pi)}$.

It follows, from MacMahon's formula (see, for example, \cite{S}) for the inversion number generating functions that 
\[
\sum_{\pi \in S_{\ell}} v^{a(\pi)} = v^{-{ \ell\choose2}} \sum_{\pi \in S_\ell} v^{2\Inv(\pi)}=v^{-{ \ell\choose2}}\prod^{\ell}_{j=1}\frac{v^{2j}-1}
{v^2-1}=\prod^{\ell}_{j=1}\frac{v^{j}-v^{-j}}{v-v^{-1}}=\prod^{\ell}_{j=1}[j]_v=[\ell]_v!
\]
\end{proof}

Using the notation $t$ and $t'$ given in the conjecture, we   start with the case where $t=t'=1$, that is to say,  canonical basis elements at the top of the crystal,  reached by adding nodes of a single residue.   We now introduce the notation which will allow us to describe the multipartition occurring in the canonical basis of the multipartitions with defect $k(a-k)$.  We let $u$ with $1 \leq u \leq 2a=r$ be the range of superscripts indicating the various partitions in the multipartition, and we recall that $S_1^i$ is the subsequence of the characteristic sequence $S_1$  for which the addable nodes are contained in $i$-corner partitions. If the addable nodes are indexed by an integer $j$, we let $u(j)$ indicate the undex of the partition containing that addable node.  After the first step, there can be several addable nodes in a single partition.
\[
\tau^n_i(S_1)^u
\begin{cases}
T_{n+1},&(S_1^i)_{u-ia}=1, 1 \leq u-ia \leq a,\\
T_{n-1},&(S_1^i)_{u-ia}=0, 1 \leq u-ia \leq a,\\
T_n&  a+1 \leq u+ia \leq 2a,
\end{cases}
\]

\begin{lem}\label{d} For a dominant integral weight $\Lambda$ of an affine Lie algebra of type $A$, if the coefficient of $\Lambda_i$ in $\Lambda$ is $a$
then the $e$-regular multipartition  $f_i^{(k)} (u_\Lambda)$ for an integer $k$ is $\tau^0_i(\tilde S_1)$, with $0 \leq k \leq a$, and the canonical basis element is
\[
G(\tau^0_i(\tilde S_1))=\sum_{S_1 \in \mathcal S(a,k)}v^{\Inv(S_1)}\tau_i^0(S_1).
\]

 The shape of the canonical basis element
$G(\tau^0_i(\tilde S_1))$ is given by $(s(a,k,0),\dots, s(a,k,k(a-k))$ where $s(a,k,\ell)$ is a recursive function which is $0$ except for $0 \leq \ell \leq k(a-k)$, and satisfies the following recursion scheme:
\[
s(1,0,0)=s(1,1,0)=1,
\]
\[
s(a,k,\ell)=s(a-1,k-1,\ell)+s(a-1,k,\ell-k).
\]
\end{lem}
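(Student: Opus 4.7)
The plan is to proceed in three stages. First I would compute $f_i^{(k)} u_\Lambda$ directly from Lemma \ref{Mathas}. The highest weight vector has exactly $a$ addable $i$-nodes---one in each partition indexed by $u$ with $k_u = i$---and no removable $i$-nodes, so Lemma \ref{Mathas} applies with $\mu = u_\Lambda$ and $\ell = k$, yielding
\[
f_i^{(k)} u_\Lambda = \sum_{S_1 \in \mathcal S(a, k)} v^{\Inv(S_1)} \tau_i^0(S_1),
\]
since distinct choice sequences $S_1$ pick out distinct subsets of the $a$ addable $i$-nodes and therefore produce distinct multipartitions.

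Next I would identify this sum with $G(\tau_i^0(\tilde S_1))$ via the uniqueness characterization of the canonical basis. Bar-invariance is immediate: the highest weight vector $u_\Lambda$ is its own canonical basis element and the divided power $f_i^{(k)}$ is bar-invariant. The unique sequence with $\Inv(S_1) = 0$ is $\tilde S_1 = (1^k, 0^{a-k})$, so modulo $v\mathcal F^s_{\mathcal A}$ the sum reduces to $\tau_i^0(\tilde S_1)$, while every other term has $\Inv(S_1) \geq 1$ and contributes in $v\mathbb{Z}[v]$. A short signature-method check confirms that the crystal operator output $\tilde f_i^k u_\Lambda$ equals $\tau_i^0(\tilde S_1)$: the signature of $u_\Lambda$ at residue $i$ is $+^a$, the cogood node sits in the first $i$-corner partition, and iterating produces $\tilde S_1$. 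These three properties---bar-invariance, unit leading term $\tau_i^0(\tilde S_1)$, remaining coefficients in $v\mathbb{Z}[v]$---characterize $G(\tau_i^0(\tilde S_1))$ uniquely.

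For the shape formula, define $s(a, k, \ell) := \#\{S \in \mathcal S(a, k) : \Inv(S) = \ell\}$, which by the above equals the multiplicity of $v^\ell$ in $G(\tau_i^0(\tilde S_1))$. The generating function $\sum_\ell s(a, k, \ell) v^\ell$ is the Gaussian binomial $\binom{a}{k}_v$, and the stated recursion is classical: I would partition $\mathcal S(a, k)$ according to the first entry. Sequences beginning with $1$ biject with $\mathcal S(a-1, k-1)$ preserving $\Inv$, contributing $s(a-1, k-1, \ell)$, while sequences beginning with $0$ biject with $\mathcal S(a-1, k)$ with $\Inv$ shifted up by $k$ (the leading $0$ pairs with each of the $k$ subsequent ones), contributing $s(a-1, k, \ell - k)$. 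The base cases $s(1,0,0) = s(1,1,0) = 1$ are immediate.

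The main obstacle, though modest, is the middle step: one must align the signature-method conventions fixed in Section 2 (reading from bottom to top, cogood node first from the right) with the convention $\tilde S_1 = (1^k, 0^{a-k})$ in order to identify $\tilde f_i^k u_\Lambda$ with $\tau_i^0(\tilde S_1)$, and must correctly invoke the Fock-space canonical-basis uniqueness statement without appealing to an underlying dominance ordering. Both are matters of bookkeeping rather than genuinely new difficulties.
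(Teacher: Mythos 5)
Your argument is correct and follows essentially the same route as the paper: compute $f_i^{(k)} u_\Lambda$ via Lemma~\ref{Mathas}, identify the result with the canonical basis element, and obtain the shape recursion by splitting the choice sequences according to their first entry. The only (minor) difference is that you spell out the bar-invariance/unit-triangularity characterization to justify the identification with $G(\tau_i^0(\tilde S_1))$ and note the Gaussian binomial interpretation, steps the paper leaves implicit.
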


\begin{proof}
The $e$-regular multipartition $\mu=\tau^0_i(\tilde S_1)$ consists of $k$ partitions $(1)$ at the beginning of  the $i$-corner multipartitions, of which there are $a$. The only addable $i$-nodes in $u_\Lambda$ are the corners of those $a$ partitions, so by the formula for the actions of the divided power in the Fock space, there are ${a \choose k}$ different multipartitions which can occur in the canonical basis element $G(\mu)$ and they will all occur.  Let $S_1$ be  any such choice of $k$ partitions from $a$ copies of $0$-corner $\emptyset$, and let $b_1,b_2,\dots,b_k$ be the positions of the $1$-entries, on a scale from $0$ to $a-1$.  By Mathas \cite{M2} , 6.16, when we act on the highest weight vector $u_\Lambda$ by $f_i^k$, we get each $\tau^0(S_1)$ multiplied by $[k]_v!$. After dividing by the factorial, as we showed in the Lemma \ref{Mathas},  the power of $v$ which occurs as coefficient of $\tau^0(S_1)$ is $\Inv(S)$.

 The function $s(a,k,\ell)$ which gives the shape is then a function counting all the multipartitions with coefficient $v^\ell$.
For $a=1$, we have $0 \leq k \leq 1$, so $k=0,1$.  This means that $0 \leq \ell \leq 1(1-1)=0$, so $\ell=0$.   We have only the defect $0$ multipartitions and so get $s(1,1,0)=s(1,1,0)=1$.  Thereafter, the
number of multipartitions with coefficient $v^\ell$ is the sum of those starting with $1$, for which the power is determined by the remaining $a-1$ elements of the sequence, and those starting with $0$, for which the initial $\emptyset$ adds $k$ to the power of $v$  determined by the remainder of the sequence, giving the desired recursion formula.
\end{proof}

For every defect, there is a degree after which all weight spaces with this defect occur at the end of strings in the block-reduced crystal graph, and once this happens, almost all addable nodes have the same residue.  For the images of the multipartitions in the top rows under the action of the Weyl group, this is true from the very beginning.
\begin{cor*}\label{shape} The shape function in closed form:
\begin{itemize}
\item for $k=1$, $s(a,1,\ell)=1$,
 for $0 \leq \ell \leq a-1$.
\item for $k=2$, $s(a,2,\ell)= \lfloor{\frac{a-\mid \ell-(a-2) \mid}{2} }\rfloor$,$0 \leq \ell \leq 2(a-2)$.
\item for $k=3$, $s(a,3,\ell)=\sum_{t=1}^{\lfloor \frac{\ell}{3}+1 \rfloor}  \left \lfloor{\frac{a-t-\mid \ell-3t-(a-2) \mid}{2} }\right \rfloor$.
\end{itemize}
\end{cor*}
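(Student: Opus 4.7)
The strategy is to identify $s(a,k,\ell)$ with the coefficient of $v^\ell$ in the Gaussian binomial $\binom{a}{k}_v$ and then evaluate this coefficient in closed form for $k\le 3$ by elementary partition enumeration. The recursion of Lemma~\ref{d}, namely $s(a,k,\ell)=s(a-1,k-1,\ell)+s(a-1,k,\ell-k)$ with base $s(1,0,0)=s(1,1,0)=1$, is exactly the Pascal-type recursion satisfied by the coefficients of the Gaussian binomial, so a routine induction on $a$ gives
\[
s(a,k,\ell) \;=\; \#\bigl\{(\lambda_1,\dots,\lambda_k) : a-k \ge \lambda_1 \ge \lambda_2 \ge \dots \ge \lambda_k \ge 0,\ \textstyle\sum_i \lambda_i = \ell\bigr\}.
\]
In other words, $s(a,k,\ell)$ counts integer partitions of $\ell$ with at most $k$ parts and largest part at most $a-k$. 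This identification is the conceptual engine for the three cases.

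For $k=1$ this interpretation immediately yields $s(a,1,\ell)=1$ for $0\le\ell\le a-1$, since the only admissible partition is $\lambda_1=\ell$. For $k=2$ I would parametrise by the smaller part $\lambda_2$: given $\ell$, the admissible values are $\max(0,\ell-(a-2)) \le \lambda_2 \le \lfloor \ell/2 \rfloor$, and a direct count of this interval, splitting into the cases $\ell\le a-2$ and $\ell>a-2$, recovers the uniform expression $\lfloor (a-|\ell-(a-2)|)/2\rfloor$ in one line of arithmetic in each case.

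For $k=3$ I would condition on the smallest part. Writing $\lambda_3=t-1$ and $\mu_i=\lambda_i-\lambda_3$ sets up a bijection between partitions of $\ell$ in the $3\times(a-3)$ box with smallest part $t-1$ and partitions of $\ell-3(t-1)$ in the $2\times(a-2-t)$ box, so
\[
s(a,3,\ell) \;=\; \sum_{t \ge 1} s(a-t,\,2,\,\ell-3(t-1)),
\]
each summand being understood as zero outside its natural range. Substituting the $k=2$ formula already established, rewriting the absolute-value argument $(\ell-3(t-1))-((a-t)-2)$ and collecting terms produces the asserted explicit sum. The upper cut-off $t=\lfloor \ell/3\rfloor+1$ appears as the smallest $t$ for which $\ell-3(t-1)<0$.

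\textbf{Main obstacle.} The principal technical point is the bookkeeping of summation limits together with the interaction of the two floor functions and the absolute value. One must verify that the natural constraints $\ell-3(t-1)\ge 0$ and $\lambda_3\le\lambda_2$ collapse precisely to the stated upper bound, and that the inner floor expression automatically returns a nonpositive value exactly when the corresponding $k=2$ count vanishes, so that no indicator function is required in the sum. This reduces to a short case analysis on the sign of $\ell-3t-(a-2)$ combined with parity considerations, but beyond this combinatorial bookkeeping no new ideas are needed beyond those already used in the $k=2$ verification.
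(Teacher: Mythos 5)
Your approach is correct and takes a genuinely cleaner conceptual route than the paper's. The paper never names the combinatorial object $s(a,k,\ell)$ is counting: it simply unrolls the recursion $s(a,k,\ell)=s(a-1,k-1,\ell)+s(a-1,k,\ell-k)$ mechanically. For $k=2$ it iterates the second term to get $\sum_{t}s(a-t,1,\ell-2(t-1))$ and then does a case analysis on the sign of $\ell-(a-2)$ to count the ones; for $k=3$ it repeats the same unrolling once more to reach $\sum_{t}s(a-t,2,\ell-3(t-1))$ and substitutes. Your identification of $s(a,k,\ell)$ with the coefficient of $v^\ell$ in the Gaussian binomial $\binom{a}{k}_v$, equivalently the number of partitions of $\ell$ fitting in a $k\times(a-k)$ box, is a real addition: the recursion in Lemma~\ref{d} is the $q$-Pascal rule $\binom{a}{k}_q=\binom{a-1}{k-1}_q+q^k\binom{a-1}{k}_q$ with matching base cases, so the identification is immediate by induction, and it makes the $k=1$ case a one-liner and turns the $k=2$ case into counting an integer interval for the smaller part rather than iterating a recursion. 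Your ``condition on the smallest part'' bijection for $k=3$ is precisely the combinatorial shadow of the paper's second unrolling, so the two arguments converge there.

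One caveat you should make explicit: the summation in the $k=3$ formula is cut at $t=\lfloor\ell/3\rfloor+1$, the constraint coming from $\ell-3(t-1)\ge0$, but there is a second constraint $t-1=\lambda_3\le a-3$, i.e.\ $t\le a-2$. Both you and the paper tacitly rely on the $k=2$ formula returning a nonpositive floor whenever $a-t<2$, so that no explicit truncation is needed. That does work out, since for $a'\le1$ the expression $\lfloor(a'-|\ell'-(a'-2)|)/2\rfloor$ is $\le0$ for all $\ell'\ge0$, but given that you flagged the bookkeeping as the main obstacle, this is exactly the spot where a sentence of verification should appear. Otherwise the argument is sound and gives a more transparent reason for the shape formulas than the paper's.
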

\begin{proof} Note that for $k=1,2$ the closed form is symmetric around $\frac{d}{2}$, where $d$ is the defect $k(a-k)$, and we presume this to be true in general. To prove that would probably require reformulating the recursion in terms of a symmetric parameter $\mid \ell - \frac{d}{2} \mid$.
\begin{itemize}
\item For $k=1$, the canonical basis element is svelte, since the multipartition multiplying $v^\ell$ will be that obtained from the highest weight vector by adding an $i$-node to partition $\ell+1$.
\item For $k=2$, we can separate $s(a,2,\ell)=s(a-1,1,\ell)+s(a-1,2,\ell-2)$.   If we continue to separate the term with $k=2$, each time adding $s(a-t,1,\ell-2(t-1))$ as long as  $\ell-2(t-1) \geq 0$, which means $t \leq \frac{\ell}{2}+1$, we get a sum of elements which are all $0$ or $1$, and we must count the number which are $1$:
\[
\sum_{t=1}^{\lfloor \frac{\ell}{2}+1 \rfloor} s(a-t,1,\ell-2(t-1)).
\]

 Case 1: If $\ell \leq \frac{d}{2}=a-2$, then $ s(a-t,1,\ell-2(t-1))=1$ from $t=1$ as long as  $\ell-2(t-1) \geq 0$, which is to say, until $t=\lfloor {\frac{\ell}{2}+1} \rfloor$, altogether  $\lfloor {\frac{\ell}{2}+1} \rfloor$ copies of $1$.  However, since   $\ell \leq a-2$,we have $|\ell-(a-2)|=a+2-\ell$ and thus
\[
\left  \lfloor{\frac{a-\mid \ell-(a-2) \mid}{2} }\right \rfloor= \left \lfloor{\frac{( \ell+2)}{2} } \right \rfloor,
\]
\noindent as desired.

 Case 2:  $\ell > a-2$, then for $t=1$ we have  $ s(a-t,1,\ell-2(t-1))=0$.  The first value of $t$ for which we get the value $1$ is when
$\ell-2(t-1) \leq a-t-1$, which is equivalent to  $\ell-a+3 \leq t$.  The total number of copies of $1$ in the sum is then $ \lfloor{\frac{( \ell+2)}{2} }\rfloor-(\ell-a+3)+1$.
This equals $ \lfloor{\frac{a-\mid \ell-(a-2) \mid}{2} }\rfloor$, as desired.

\item For $k=3$, by the same arguments we used above, the sum goes from $t=1$, which could give a value $0$, as long as $\ell-3(t-1) \geq 0$, giving
\[
s(a,3,\ell)=\sum_{t=1}^{\lfloor \frac{\ell}{3}+1 \rfloor } s(a-t,2,\ell-3(t-1)).
\]
Substituting from the result for case $k=2$, we get the desired formula.
\end{itemize}
\end{proof}
\begin{lem}\label{n}
A multipartition derived from the top rows under the action of a reduced word in the Weyl group generators of length $n$ is
\[
\tau^n_i(\tilde S_1).
\]

The canonical basis element of the images of a multipartition from the top rows under the action of a reduced word in the Weyl group generators of length $n$ is
\[
G(\tau^n_i(\tilde S_1))=\sum_{S_1 \in \mathcal S(a,k)}v^{\Inv(S_1)}\tau^n_i(S_1).
\]
The shape is given by the same recursive formula $s(a,k,\ell)$ given in the previous lemma.
\end{lem}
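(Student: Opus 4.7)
My plan is to induct on $n$, with the base case $n=0$ given by \lemref{d}. For the inductive step I apply the divided power $f_{r_n}^{(m_n)}$ to both sides of the formula for $G(\tau^{n-1}_i(\tilde S_1))$, where $r_n = i + n \bmod 2$ is the residue of the $n$-th generator in the alternating reduced word, and $m_n$ is the length of the $r_n$-string through the weight of $\tau^{n-1}_i(\tilde S_1)$.

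The key geometric observation driving the induction is that a triangular partition $T_p$ in a $c$-corner slot has all of its addable nodes of the single residue $c + p \bmod 2$ and all of its removable nodes of the opposite residue. Applied to the three families appearing in $\tau^{n-1}_i(S_1)$---the $T_n$ and $T_{n-2}$ in $i$-corner slots and the $T_{n-1}$ in $i'$-corner slots---a one-line parity check gives all addable nodes of residue $r_n$ and no removable nodes of residue $r_n$. Hence the $r_n$-signature of $\tau^{n-1}_i(S_1)$ is a pure string of pluses, the total string length $m_n = k(n+1) + (a-k)(n-1) + an = 2an - a + 2k$ is independent of $S_1$, and $\tau^{n-1}_i(S_1)$ sits at the top of its $r_n$-string. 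Since $m_n$ equals the total number of $r_n$-addable nodes, the only valid choice sequence is the all-ones sequence, and \lemref{Mathas} applied with $\ell = m_n$ yields $f_{r_n}^{(m_n)}\tau^{n-1}_i(S_1) = \tau^n_i(S_1)$ with coefficient $1$, the triangles promoting uniformly via $T_n \to T_{n+1}$, $T_{n-2} \to T_{n-1}$, and $T_{n-1} \to T_n$. Summing over $S_1$ produces
\[
f_{r_n}^{(m_n)}\,G\bigl(\tau^{n-1}_i(\tilde S_1)\bigr) = \sum_{S_1 \in \mathcal S(a,k)} v^{\Inv(S_1)}\,\tau^n_i(S_1).
\]

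To close the induction I identify this sum with $G(\tau^n_i(\tilde S_1))$. It is bar-invariant because $f_{r_n}^{(m_n)}$ commutes with the bar involution and the inductive hypothesis provides a bar-invariant input. The sequence $\tilde S_1 = (1,\ldots,1,0,\ldots,0)$ has $\Inv(\tilde S_1) = 0$, so $\tau^n_i(\tilde S_1)$ appears with coefficient $1$, while every other $S_1$ has $\Inv(S_1) \geq 1$, placing its multipartition's coefficient in $v\mathbb{Z}[v]$. Because the largest triangles $T_{n+1}$ sit in the leftmost slots of $\tau^n_i(\tilde S_1)$, that multipartition dominates every other $\tau^n_i(S_1)$ in the dominance order, and the standard characterization of canonical basis elements---bar-invariance, leading multipartition with coefficient $1$, and lower-dominance terms in $v\mathbb{Z}[v]$---forces the sum to be $G(\tau^n_i(\tilde S_1))$. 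The shape is inherited unchanged, since the multiset of exponents $\{\Inv(S_1) : S_1 \in \mathcal S(a,k)\}$ is the same as in \lemref{d}.

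The main obstacle is the residue calculation guaranteeing that the $r_n$-signature is pure pluses with length independent of $S_1$; this is where the rigid triangular shape is essential, since any less symmetric shape could produce a mixture of addable and removable $r_n$-nodes, forcing cancellation in the signature and coupling the coefficients of different summands in a way that would obstruct the clean inductive transport from stage $n-1$ to stage $n$.
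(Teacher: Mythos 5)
Your proof is correct and takes essentially the same route as the paper's: induction on $n$, base case from \lemref{d}, the observation that every component of $\tau^{n-1}_i(S_1)$ is a triangle with addable nodes all of residue $r_n$ and no removable $r_n$-nodes (the paper expresses this by the count $k(n+2)+(a-k)n+a(n+1)$ of addable nodes, which, after the index shift, agrees with your $m_n=2an-a+2k$), and an application of the Mathas result to promote every summand uniformly by one triangular step. The one place you go beyond the paper's text is in verifying directly that the resulting sum is the canonical basis element: the paper simply invokes the full strength of Mathas's Theorem 6.16, which already states that $f_i^{(m)}$ applied to a canonical basis element with no removable $i$-nodes and exactly $m$ addable $i$-nodes is again a canonical basis element, whereas you reassemble that conclusion from bar-invariance of $f_{r_n}^{(m_n)}$, the leading coefficient $1$ on $\tau^n_i(\tilde S_1)$, and dominance of $\tau^n_i(\tilde S_1)$ over all other $\tau^n_i(S_1)$. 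This makes your proof slightly more self-contained, but the underlying argument is the same.
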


\begin{proof}

We are doing explicitly  the case residue $i=0$, the case $i=1$ being dual. We now apply  Theorem 6.16 from \cite{M2}  to each of the multipartitions $\tau^0(S_1)$ in $G(\tau^0(\tilde S_1))$.  The hub is $[a-2k,a+2k]$, so there are $a+2k$ addable $1$-nodes, two for each partition $(1)$ in the $0$-corner part, and $a$ for all the $1$-corner nodes. The result of adding all these nodes is exactly $\tau^1(S)$, and by the theorem quoted above, that is the result of acting on $\tau^0(S)$ by the divided power $f_i^{(2k+a)}$.  The result is a canonical basis element of exactly the same shape as before.

We now continue by induction, assuming that we have a canonical basis element of the desired shape, and calculating that the  number of addable nodes in
$\tau^n(S)$ must be $k(n+2)+(a-k)n +a(n+1)$, and the result of adding them all will be $\tau^{n+1}(S)$.  After applying the Mathas result \cite{M2}, 6.16 again, we get the desired canonical basis element. The case $i=1$ is dual.

\end{proof}

\ \noindent \begin{figure}[ht]
\centering
\includegraphics[bb=0 0 700 350]{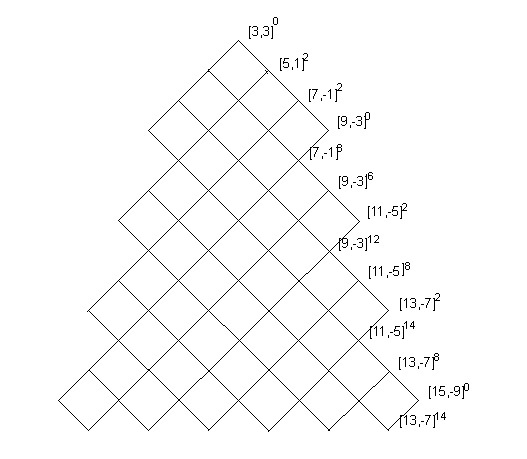}
\captionsetup{labelformat=empty}
\caption{Figure 1: $e=2,\Lambda=3\Lambda_0+3\Lambda_1$, truncated at degree $13$.}
\end{figure}

In Figure 1, we drew the symmetric  block reduced crystal for $a=3$, where the vertices on the right are labelled by the hub with the defect as superscript, and the vertices on the left are symmetric.  The label of any vertex in the interior can be obtained by going down the lattice, adding $r=6$ to the defect and leaving the hub the same.

\begin{lem}\label{n=0}
For a symmetric crystal for $\Lambda=a\Lambda_0+a\Lambda_1$ with $e=2$, any weight space which has content $(k,1)$ or $(1,k)$  for $1 \leq k  \leq a$  has dimension $2$ if $k=1$ and $3$ if $k \geq 2$. For each  path $p$ through the crystal, there is an integer $m$ such that:

\[
G(  \pi^0(\tilde S_1,\dots, \tilde S_{m})) =\sum_{S_1 \in \mathcal S(c_1,k^1)}\dots  \sum_{S_{t} \in \mathcal S(c_m,k^m)} v^{\Inv(S_1)+\dots+\Inv(S_m)} \pi^0(S_1,\dots,S_m)
\]
where
\begin{itemize}
\item $\mathbf{p=(0^k,1):}$  $m=2$, $t=2, c_1=a, k^1=k, S_1 \in \mathcal S(a,k), c_2 = a+2k, k^2=1, S_2 \in \mathcal S(2k+a,1)$ with the single $1$ in position $j_2$, and $\pi^0(S_1,S_2)$ is identical to $\tau^0(S_1)$, except for the following partitions:
\[
\pi^0(S_1, S_2)^u=
\begin{cases}
(2),& 1 \leq u \leq a,u=u(j_2), (S_2^0)_u=1,j_2 \equiv 1 \mod 2,\\
(1^2),& 1 \leq u \leq a,u=u(j_2), (S_2^0)_u=1,j_2 \equiv 0 \mod 2,\\
(1),& j_2>2k, u=u(j_2).
\end{cases}
\]
The case  $\mathbf{p=(1^k,0)}$ is dual.

\item $\mathbf{p=(1,0^k):}$   $m=2$, $t=2, c_1=a, k^1=1, S_1 \in \mathcal S(a,1)$ with the single $1$ in position $j_1$, $ c_2 = a+2,  k^2=k,  S_2 \in \mathcal S(a+2,k)$, and $\pi^0(S_1,S_2)$ is identical to $\tau^0(S_1)$, except for the following partitions:
\[
\pi^0(S_1, S_2)^u=
\begin{cases}
(1),&u \leq a, (S_2^0)_u=1,\\
(2),&  u=u(j_1), S_2^1=(1,0),\\
(1^2),& u=u(j_1), S_2^1=(0,1),\\
(T_2),& u=u(j_1), S_2^1=(1,1).
\end{cases}
\]
The case  $\mathbf{p=(0,1^k)}$ is dual.

\item $\mathbf{p=(0,1,0^{k-1})}$  $m=2$,  $t=3, c_1=a, k^1=1, S_1 \in \mathcal S(a,1)$ with the single $1$ in position $j_1$, $ c_2 = a+2,                k^2=1, S_2 \in \mathcal S(a+2,1)$ with a single $1$ in position $j_2$, $k^3=k-1$.   Then  $\pi^0(S_1, S_2, S_3)$ is identical to $\tau^0(S_1)$, except for a few special partitions depending on the values of $j_2$:

\noindent If $j_2 \leq 2, c_3=a, S_3=S_3^0 \in \mathcal S(a,k-1),$
\[
\pi^0(S_1, S_2,S_3)^u=
\begin{cases}
(1),&(S_3^0)_u=1, u \neq j_1,\\
(3),&(S_2^0)=(1,0), u=j_1, (S_3)_{j_1}=1,\\
(2),&(S_2^0)=(1,0), u=j_1, (S_3)_{j_1}=0,\\
(1^3),&(S_2^0)=(0,1),  u=j_1, (S_3)_{j_1}=1,\\
(1^2),&(S_2^0)=(0,1),  u=j_1, (S_3)_{j_1}=0.
\end{cases}
\]
\noindent If $j_2 >2 , c_3=a+1, S_3=\in \mathcal S(a+1,k-1)$,
\[
\pi^0(S_1, S_2,S_3)^u=
\begin{cases}
(1),&(S_3^0)_u=1, u<j_1\\
(1),&(S_3^0)_{u-1}=1,j_1< u \leq a\\
(2),&S_3^1=(1,0), u=u(j_2),\\
(1^2),&S_3^1=(0,1), u=u(j_2),\\
T_2,&S_3^1=(1,1),u=u(j_2).
\end{cases}
\]
The case  $\mathbf{p=(1,0,1^{k-1})}$ is dual.
\end{itemize}
\end{lem}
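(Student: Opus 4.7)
The plan is to compute $G(\pi^0(\tilde S_1,\dots,\tilde S_m))$ iteratively, starting from the highest weight vector and applying the divided power $f_{i_\ell}^{(k^\ell)}$ one stage at a time along the path $p$. After each application the result is automatically bar-invariant, since divided powers commute with the bar involution, so to identify it as a canonical basis element it suffices to check (i) that the $e$-regular multipartition produced by the signature method occurs with coefficient $1$, and (ii) that every other multipartition in the sum has coefficient in $v\mathbb Z[v]$. \lemref{Mathas} supplies the exact factor $v^{\Inv(S_\ell)}$ for each choice function at each stage, so the powers of $v$ accumulate additively as $\Inv(S_1)+\cdots+\Inv(S_m)$, exactly as the formula requires.

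For the dimension assertion I would first observe that a weight of content $(k,1)$ or $(1,k)$ is reached from $u_\Lambda$ by exactly one of the three residue-collected paths $(0^k,1)$, $(0,1,0^{k-1})$, $(1,0^k)$ (or their $0 \leftrightarrow 1$ duals): the first two paths coincide when $k=1$, yielding dimension $2$, and are distinct for $k \geq 2$, yielding dimension $3$. The resulting $e$-regular multipartitions are manifestly different because they differ in the shape of the first $0$-corner partition affected.

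For the path $p=(0^k,1)$ I would start from the canonical basis element $G(\tau^0_0(\tilde S_1)) = \sum_{S_1}v^{\Inv(S_1)}\tau^0_0(S_1)$ established in \lemref{d} and apply $f_1$. In each summand $\tau^0_0(S_1)$ there are $2k+a$ addable $1$-nodes: one to the right and one below each of the $k$ copies of $(1)$ in the $0$-corner section, plus the corner of each of the $a$ empty $1$-corner partitions. Reading signatures from the bottom upward produces the stated exponent $v^{\Inv(S_2)}$ and the case-by-case values of $\pi^0(S_1,S_2)^u$. The path $p=(1,0^k)$ is handled analogously: I would start from $G(\tau^0_1(\tilde S_1))$, which has a single $(1)$ at the $1$-corner position $u(j_1)$, and apply $f_0^{(k)}$; the $0$-addable nodes are now the $a$ corners of the $0$-corner empties together with the two addable $0$-nodes (right and below) of the $(1)$ at $u(j_1)$, so $c_2 = a+2$, and the decomposition $S_2 = S_2^0 \vee S_2^1$ with $|S_2^1|=2$ records which of those two nodes are used, producing $(2)$, $(1^2)$, or $T_2$ exactly as in the case table.

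The main obstacle is the path $p=(0,1,0^{k-1})$, where $c_3$ genuinely depends on the previous choice $S_2$, as already illustrated in \exampref{dif}. After the first two steps I would split into two branches: if $S_2$ chooses one of the two addable $1$-nodes inside the $0$-corner $(1)$ at $u(j_1)$ (equivalently, $j_2 \leq 2$), that partition has already become $(2)$ or $(1^2)$ and offers only a single addable $0$-node while the other $0$-corner empties each still contribute a corner, giving $c_3 = a$; if instead $S_2$ creates a new $(1)$ at some $1$-corner position $u(j_2)$ (equivalently, $j_2 > 2$), that new $(1)$ contributes two addable $0$-nodes while the $(1)$ still sitting at $j_1$ contributes none, giving $c_3 = a+1$. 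In each branch I would enumerate the $0$-addable nodes explicitly, verify that the partitions appearing in the case list are exactly those produced by $S_3$, and invoke \lemref{Mathas} once more to pick up the factor $v^{\Inv(S_3)}$. The dual paths $(1^k,0)$, $(0,1^k)$, and $(1,0,1^{k-1})$ then follow immediately from the $0 \leftrightarrow 1$ symmetry built into the symmetric weight $\Lambda = a\Lambda_0+a\Lambda_1$.
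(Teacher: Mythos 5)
Your proposal follows the same route as the paper's proof: step along the residue-collected path, apply the divided power at each stage, enumerate the addable nodes to determine $c_\ell$ (with the branch on $j_2$ for the path $(0,1,0^{k-1})$ handled exactly as in the paper), and invoke \lemref{Mathas} together with Mathas 6.16 to read off the coefficient $v^{\Inv(S_\ell)}$. The only substantive addition is that you state explicitly the standard criterion (bar-invariance plus a unique Fock term with coefficient $1$, all others in $v\mathbb{Z}[v]$) that justifies identifying the resulting bar-invariant sum as a single canonical basis element rather than a $\mathbb{Z}[v+v^{-1}]$-combination of several; the paper leaves this implicit but your articulation is compatible with it and makes the argument cleaner.
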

\begin{proof}
Before dividing into cases, we review the notation. If $j_\ell$ is the index of an addable node in a list of all addable nodes, then $u(j_\ell)$ is the index of the partition in which this node is located.

 All $e$-regular partitions for content $(k,1)$ are obtained by straightforward application of the signature method, with no removable nodes involved. The paths $(0^j,1,0^{k-j})$ give the same $e$-regular partition for any $1 \leq j  \leq k-1$, so we will assume $j=1$.
\renewcommand{\labelitemi}{$\circ$}
\begin{itemize}
\item Now we do the canonical basis elements, starting with path $p=(0^k,1)$ . This first case is easier, because it can be obtained by a single application of $f_1$ from the canonical basis element constructed in Lemma \ref{d}. The multipartition $\tau^0(S_1)$  has $k$ copies of $(1)$ in the $0$-corner partitions and every other partition $\emptyset$.  We now add the $j_2$th  $1$-node, and there are three possibilities.  If $j_2$ is odd and $j_2 \leq 2k$, then  we add to the side of the copy of $(1)$ in position $u=u(j_2)$  to get $(2)$ .  If  $j_2$ is even and $j_2 \leq 2k$, then  we add to the bottom of the copy of $(1)$ in position $u=u(j_2)$  to get $(1^2)$ .  If $j_2>2k$, then we add a new $1$-corner copy of $(1)$ in position $u$ with $u=u(j_2)=a+(j_2-2k)$. The multipartition $\tau^0(S_1)$  had a coefficient $\Inv(S_1)$, and now we multiply that by $v$ to the power ${j_2-1}=\Inv(S_2)$, the number of addable $1$-nodes above the one we just added.  There are no removable $1$-nodes because this is the first $1$-node that we are adding.

\item To calculate the case  $p=(1,0^k)$ we start with the canonical basis element of $f_1u_\Lambda$, which by Corollary \ref{shape} must be svelte, with the multipartitions dependent on a choice $j_1$ of a number between $1$ and $a$ for the position of the partition $(1)$ among the $0$-corner partitions. Each such multipartition is multiplied by $v^{j_1-1}$, because there are $j_1-1$ addable $1$-nodes before it. There are now $a+2$ addable $0$-nodes, and we let $S_2$ represent the choice of $k$ nodes from among them. Each of the resulting multipartitions is multiplied by an additional factor of $v^{\Inv(S_2)}$, as described in Lemma \ref{d}.

\item Finally, we have to calculate a longer path than the two before it, $(0,1,0^{k-1})$.  We start from a svelte canonical basis element  $G(\tau^0_0(\tilde S_1))$.  Each multipartition in the canonical basis has no removable $1$ nodes and $a+2$ addable $1$-nodes, these being the original $1$-corner partitions and two new addable nodes from the $0$-corner partition $(1)$, so we choose $j_2$  from $1$ to $a+2$ to give the position of the only $1$ in $S_2$. Altogether, the two choices multiply the resulting partition by $v^{\Inv(S_1)+\Inv(S_2)}$.  We now want to operate by $f^{(k-1)}$. There are $a-1$ $0$-corner nodes which have not been filled, and if the $1$  was added in a $1$-corner partition, it adds an additional $2$ addable $0$-nodes, giving $c_3=a+1$.  If the $1$-node was added to $(1)$ to give $(2)$ or $(1^2)$, then  there are $c_3=a-1+1=a$ possible $0$ nodes. Thus $c_3=a$ if $j_2=1,2$, and $c_3=a+1$ if $j_2>2$. There are thus two possible lengths for the choice functions $S_3$.

Case $j_2 \leq 2$:  In this case $S_2^0$ = $(1,0)$ or $(0,1)$ with $u=j_1$, we get either $(2)$  if  $j_2=1$ or or $(1^2)$ if $j_2=2$. Then $S_3=S_3^0$ and chooses $k-1$ among the $a$ possibilities. If the position chosen lies in a non-empty partition, we get $(3)$ or $(1^3)$, depending on the parity of $j_2$.

Case $j_2>2$:  In this case $S_2^1$ has a $1$ in position $j_2-2$, which means that $S_2$ adds a partition $(1)$ for $u=u(j_2)=a+j_2-2$. This means that $S_3^1$ has length $2$ and we get either $(2)$ or $(1^2)$ depending on the parity of $j_2$, while $S_3^0$ has length $a-1$ and adds $(1)$ partitions corresponding to the $1$-entries, can add them everywhere except $u-j_1$, so the $u$ above $j_1$ correspond to a position $u-1$ in $S_3^0$. Since $S_3^1$ has length $2$, it can be $(1,0),(0,1)$, or $(1,1)$, giving three possible partitions, $(2)$, $(1^2)$, or $T_2$.
\end{itemize}
\end{proof}

We are now ready to describe the remaining canonical basis elements with these defects. In order to formulate the theorem, we did computer calculation on many different cases.  Only when we were certain that we had identified all the different elements which were needed for each type of path did we formulate the algebraic proof.

\begin{prop}\label{general} All  external weight spaces with defect $(k-1)(a-k+1)+2a$ for $1 \leq k \leq a$ have  canonical basis elements  for $n \geq 1$ 
 depending on the path as follows:

\[
G(  \pi^{n}(\tilde S_1,\dots, \tilde S_{m}))=\sum_{S_1 \in \mathcal S(c_1,k^1)}\dots  \sum_{S_{t} \in \mathcal S(c_\ell,k^\ell)} v^{\Inv(S_1)+\dots+\Inv(S_m)} \pi^{n}(S_1\dots,S_m),
\]
\begin{itemize}
\item $\mathbf{p=(0^k,1^{2k+a-1},\dots):}$   $t=2, c_1=a, k^1=k, S_1 \in \mathcal S(a,k), c_2 = 2k+a, k^2=2k+a-1, S_2 \in \mathcal S(2k+a,2k+a-1)$ with the single $0$ in position $j_2$, and $\pi^n(S_1,S_2)$ is identical to $\tau^n(S_1)$ for $n \geq 1$, except for the following partitions:
\[
\pi^n(S_1, S_2)^u=
\begin{cases}
U_{1}^n,&  u \leq a, u=u(j_2), j_2 \equiv 0 \mod 2,\\
U_{2}^n,&  u \leq a,, u=u(j_2), j_2 \equiv 1 \mod 2,\\
T_{n-2},& u>a, u=u(j_2).
\end{cases}
\]
The case  $\mathbf{p=(1^k,0^{2k+a-1},\dots)}$ is dual.

\item $\mathbf{p=(1,0^k,1^{2k+a-2},\dots):}$   $t=2, c_1=a, k^1=1, S_1 \in \mathcal S(a,1)$ with the single $1$ in position $j_1$, $ c_2 = a+2, k^2=k, S_2 \in \mathcal S(a+2,k)$, and $\pi^n(S_1,S_2)$ is identical to $\tau^n(S_2^0)$ for $n \geq 1$, except for the following partitions:

If $S_2^1=(0,0), (1,0), (0,1)$, we need an additional characteristic sequence $S_3$ of length $2k+a-1$with a single $0$ in position $j_3$:
\[
\pi^n(S_1, S_2,S_3)^u=
\
\begin{cases}
T_{n+1},&u \leq a, u \neq u(j_3),( S_2^0)_u=1,\\
U^n_1,& u \leq a, u=u(j_3), j_3 \equiv 0 \mod 2, \\
U^n_2,& u \leq a,u=u(j_3), j_3 \equiv 1 \mod 2, \\
T_{n-2}& u>a,u \neq u(j_1), u=u(j_3),\\
U^n_1,& u > a, u=u(j_3), \\
U^{n+1},& u > a, u \neq u(j_3), S_2^1=(1,0),\\
U^n_2,& u > a,u=u(j_3), S_2^1=(1,0),\\
U^{n+1}_2,& u > a,u \neq u(j_3), S_2^1=(0,1),\\
U^{n}_2,& u > a,u = u(j_3), S_2^1=(0,1),\\
T_{n-2},&u>a,u=u(j_3), u \neq u(j_2).
\end{cases}
\]

If $S_2^1=(1,1)$, then there is no need for a third characteristic sequence. 

\[\pi^n(S_1, S_2)^u=
\begin{cases}
T_{n+1},&u \leq a, (S_2^0)_u=1,\\
T_{n+2}& u>a,u = u(j_1),\\
T_{n}& u>a,u \neq u(j_1).
\end{cases}
\]
The case  $\mathbf{p=(0,1^k, 0^{2k+a-2},\dots)}$ is dual.

\item $\mathbf{p=(0,1,0^{k-1},1^{2k+a-2},\dots)}$   $t=3,  c_1=a, k^1=1, S_1 \in \mathcal S(a,1)$ with the single $1$ in position $j_1$, $ c_2 = a+2, k^2=1, S_2 \in \mathcal S(a+2,1)$ with a single $1$ in position $j_2$.  The structure and length of $S_3$ depends on the value of $j_2$.  In all cases where it is needed, $S_4$ is of length $2k+a-1$, and has a single $0$ in position $j_4$. Then  $\pi^n(S_1,S_2,S_3,S_4)$ is identical to $\tau^n(S_1)$ for $n \geq 1$, except for a few special partitions depending on the values of $j_2,  S_3$  and $ S_4$:

\noindent If $j_2 \leq 2$,
then for $(S_3^0)_{j_1}=1$,
\[
\pi^n(S_1, S_2,S_3)^u=
\begin{cases}
T_{n+1},& u \neq j_1, (S^0_3)_u=1,\\
U_1^{n+1},&u=j_1,(S_2^0)=(1,0),\\
U_2^{n+1},&u=j_1,(S_2^0)=(0,1).
\end{cases}
\]
and for  $(S_3^0)_{j_1}=0$, we have
\[
\pi^n(S_1, S_2,S_3,S_4)^u=
\begin{cases}
T_{n+1},& u  \neq  j_1, (S^0_3)_u=1, u \neq u(j_4) ,\\
U_1^{n},& u \neq  j_1, (S^0_3)_u=1,u=u(j_4), j_4 \equiv 0+\lfloor \frac{u}{u(j_1)} \rfloor   \mod 2, \\
U_2^{n},& u \neq  j_1, (S^0_3)_u=1,u=u(j_4), j_4 \equiv 1 +\lfloor \frac{u}{u(j_1)} \rfloor  \mod 2, \\
T_{n+1},& u = j_1, u(j_4) \neq j_1, \\
U_1^{n},& u = j_1,  u(j_4)=j_1, S_2^0=(1,0), \\
U_2^{n},& u = j_1,  u(j_4)-j_1, S_2^0=(0,1), \\
T_{n-2},&u>a,u=u(j_4).
\end{cases}
\]
\noindent If $j_2 >2$,we set  $ \pi^n(j_1,S_2,j_3)^u$ equal to $\tau^n_0(S_2')^u$ except for the following cases:

if $S_3^1=(0,0)$,
\[
\pi^n(S_1, S_2,S_3)^u=
\begin{cases}
T_{n+1},&(u=j_1) \vee  (S_3^0)_u=1,u \neq u(j_4),\\.
U_1^{n},& u \leq a, u=u(j_4), j_4 \equiv 0 \mod 2,\\
U_2^{n},&  u \leq a,u=u(j_4), j_4 \equiv 1 \mod 2,\\
T_n, &u=u(j_2)>a,\\
T_n, &u>a, u \neq u(j_4),\\
T_{n-2}, &u>a, u = u(j_4).\\
\end{cases}
\]

if $S_3^1=(1,0),(0,1)$,
\[
\pi^n(S_1, S_2,S_3,S_4)^u=
\begin{cases}
T_{n+1}, &u=j_1 \vee  (S_3^0)_u=1,u \neq u(j_4),\\
T_{n-1},&u\neq j_1 \wedge  (S_3^0)_u=0,\\
U_1^{n},& u=u(j_4), j_4 \equiv 1 \mod 2,\\
U_2^{n},& u=u(j_4), j_4 \equiv 0 \mod 2,\\
U_1^{n+1},&u=u(j_2),S_3^1=(1,0),u \neq u(j_4),\\
U_1^{n},&u=u(j_2),S_3^1=(1,0),u=u(j_4),\\
U_2^{n+1},&u=u(j_2),S_3^1=(0,1),u \neq u(j_4),\\
U_2^n,&u=u(j_2), S_3^1=(0,1),u=u(j_4),\\
T_n, &u>a, u \neq u(j_4),u \neq u(j_2),\\
T_{n-2}, &u>a, u = u(j_4).
\end{cases}
\]

if $S_3^1=(1,1),k \geq 3$,
\[
\pi^n(S_1, S_2,S_3)^u=
\begin{cases}
T_{n+1},&u=j_1 \vee  (S_3^0)_u=1,\\
T_{n-1},&u\neq j_1 \vee  (S_3^0)_u=0,\\
T_{n+2}, &u=u(j_2),\\
T_n,&u>a,u \neq u(j_2).
\end{cases}
\]

The case  $\mathbf{p=(1,0,1^{k-1})}$ is dual.
\end{itemize}
\end{prop}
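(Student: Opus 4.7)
The plan is to induct on $n$, with the base case $n=0$ supplied by Lemma~\ref{n=0}, and extend to $n \geq 1$ by repeated application of divided powers $f_1^{(\ell)}$ (or $f_0^{(\ell)}$ via duality), following the template of Lemma~\ref{n}. For each of the three path types, I would first identify a ``base'' canonical basis element obtained from a shorter initial path and then realize the desired element as the image under one additional divided power applied to that base. Since the paths in the statement stabilize at the beginning of the long $i$-string (Definition~\ref{stab}), every subsequent action past that point is a Weyl group reflection along a full $i$-string and preserves the shape, so it suffices to carry out the computation at the first step where the long string begins.

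The main computational engine is Mathas's formula (Lemma~\ref{Mathas}): whenever $\mu$ has at least $\ell$ addable $i$-nodes and $\lambda$ is obtained from $\mu$ by adding $\ell$ such nodes according to a choice sequence $S$, the coefficient of $\lambda$ in $f_i^{(\ell)}(\mu)$ equals $v^{\Inv(S)}$. Applying this to each multipartition appearing in the base canonical basis element, and then summing, yields a bar-invariant element whose coefficient of $\pi^n(\tilde S_1,\dots,\tilde S_m)$ is $1$; by uniqueness of the canonical basis this sum must be $G(\pi^n(\tilde S_1,\dots,\tilde S_m))$.

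For path type $p=(0^k,1^{2k+a-1},\dots)$ I would start from $G(\tau^n_0(\tilde S_1))$ of Lemma~\ref{n} and apply $f_1^{(2k+a-1)}$; the single $0$-entry of $S_2$, indexed by $j_2$, selects which partition among the $a$ $0$-corner copies of $T_{n+1}$ receives only $n-1$ of the $1$-nodes it could accept (producing $U^n_1$ or $U^n_2$ according to the parity of $j_2$), or which $1$-corner $T_n$ receives only $n-2$ new nodes (producing $T_{n-2}$). For $p=(1,0^k,1^{2k+a-2},\dots)$ I would take the $n=0$ element from Lemma~\ref{n=0}, extend to arbitrary $n$ via Lemma~\ref{n}, and apply $f_1^{(2k+a-2)}$, splitting on the four values of $S_2^1$; the case $S_2^1=(1,1)$ is exceptional because both $1$-corner addable $i$-nodes at position $u(j_1)$ are already absorbed into a $T_{n+2}$, so no additional choice sequence $S_3$ is required. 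For $p=(0,1,0^{k-1},1^{2k+a-2},\dots)$ the same procedure applies with an additional branching on $j_2 \leq 2$ versus $j_2 > 2$ inherited from Lemma~\ref{n=0}, and within the second sub-branch on the three non-trivial values of $S_3^1$; the case $S_3^1=(1,1)$ with $k \geq 3$ again collapses into a single sum without a fourth choice sequence.

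The main obstacle is the bookkeeping surrounding the fact that $c_\ell$ genuinely depends on the previous choices $S_1,\dots,S_{\ell-1}$, as illustrated by Example~\ref{dif}, so the number of addable nodes to which Mathas's formula is applied varies across the summands of the base element. I would organize the case analysis so that within each subcase the relevant $c_\ell$ is constant and Mathas's formula applies uniformly. A secondary delicate point is verifying that each $\pi^n$ is the genuine $e$-regular multipartition produced by the signature method in the stabilized regime: this amounts to checking that no $-+$ cancellations occur in the new $i$-strings, which follows because the residue pattern of the stabilized tail is already locally alternating, and the special shapes $U^n_1$, $U^n_2$, $T_{n \pm 1}$, $T_{n \pm 2}$ appearing in the statement are precisely the shapes that arise from failing to add (respectively succeeding in removing) exactly the one $i$-node indicated by $j_2$, $j_3$, or $j_4$.
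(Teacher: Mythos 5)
Your proposal follows essentially the same route as the paper's own proof: compute the element at $n=1$ by a single application of the appropriate divided power to a known canonical basis element, invoke Lemma~\ref{Mathas} (Mathas) for the coefficients, split into the same subcases according to $S_2^1$ or $S_3^1$ and $j_2\leq 2$ vs.\ $j_2>2$ to handle the dependence of $c_\ell$ on earlier choices, and then propagate to $n\geq 2$ by noting that the weight space is external so that every remaining step of the stabilized tail adds all addable nodes with no further choices and preserves the shape. The only superficial differences are cosmetic --- the paper reaches $\pi^1$ for the first path type by applying $e_1$ to $G(\tau^1_0(\tilde S_1))$ rather than $f_1^{(2k+a-1)}$ to $G(\tau^0_0(\tilde S_1))$, and for the middle path type it computes $\pi^1$ directly from the Fock space action rather than phrasing it as an ``extension'' of the $n=0$ element via Lemma~\ref{n}, which is not literally applicable to the non-top-row multipartitions there --- but the mechanism is the same in both treatments.
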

\begin{proof} We treat each case separately.
\begin{itemize}
 \item $\mathbf{p=(0^k,1^{2k+a-1},\dots):}$
In this case, we want to start the numbering at  $n=1$. The hub of $\tau^0_0(\tilde S_1)$ is $[a-2k,a+2k]$.  The $1$-string is of length $2k+a$ and ends at $\tau^1_0(\tilde S_1)$, whose canonical basis element is given in Lemma \ref{n}. Then the operation by $e_i$ on the canonical basis element give multipartitions which have been modified by removing the $j_2$th removable node.  If $1 \leq j_2 \leq 2k$, then if $j_2$ is odd, we replace one copy of the triangular partition $T_2$ by $(1^2)$ because we remove the first removable node, whereas if $j_2$ is even, we replace one $T_2$ by $(2)$, having removed the second $1$-removable node. On the other hand, if $j > 2k$, then we remove the $(j_2-2k)$th  of the $1$-corner partitions, leaving $\emptyset$ in that spot.  This gives exactly the values of $\tau^1_0(S_1)$, except for one partition, which is described in the proposition, its location depending on the value of $j_2$, so the case $n=1$ is solved for this path.

 The hub of this weight space is $[a+2k,a-2k]+(a+2k-1)[-2,2]=[-a-2k+2,3a+2k-2]$.  The $1$-string above this $1$-string is shorter by two vertices, so there is no point above this weight space in the $0$-direction.  Thus our weight space is external and lies at the beginning of a $0$-string of length  3a+2k-2.  In the original $\tau^1_0(\tilde S_1)$ there were exactly $3a+2k$ addable $0$-nodes, one for each of $a-k$ $0$-corner $\emptyset$, three for each $0$ corner $T_2$, and two for each $1$-corner $(1)$.  In each of the three different cases for $j_2$, the effect of removing the $1$-node was to reduce the number of possible $0$-nodes by $2$.  If we left out the first $1$-node in a $T_2$, then we can only add a single $0$-node at the bottom, giving $(1^3)$.  If we omitted the bottom $1$-node of a $T_2$, then we can only add a single $0$-node at the end, giving $(3)$. If we omitted a copy of $(1)$ among the $1$-corner nodes, then we cannot add any $0$-corner nodes at the spot. Altogether, as we go down the $0$-string, we add all the $0$-nodes to every multipartition, so there are no choices, and the shape of the multipartition at the end of the string is exactly as it was at the beginning, and we get the formula in the proposition for $n=2$ and all the multipartitions appearing in the canonical basis have only $1$-addable nodes. We now continue by induction, since for each $n$, all the partitions occurring in the formulae for the multipartitions for have only addable nodes, in this case,  of the parity opposite to that of $n$.  Since the vertex is external, the number of addable nodes equals the length of the string, so adding all addable nodes gets one to the vertex of the same defect at the other end of the string. There are never any choices and the shape of the canonical basis element is preserved.

The dual case $(1^k,0^{2k+a-1},\dots)$ is very similar.

\item  $\mathbf{p=(1,0^k,1^{2k+a-2},\dots):}$
Here $1 \leq j_1 \leq a$ chooses one of the $1$-corner partitions, in position $u(j_1)=a+j_1$.  Then $S_2$ distributes $k$ $0$-nodes, of which none, one, or  two can be places on a $1$-corner partition, so that the total number of possibilities is ${a+2 \choose k}$.  We now need to determine the number of addable $1$-nodes
and identify the multipartitions. In the case $n=1$, which is after adding one $1$-node, $k$ $0$-nodes, and $2k+a-2$ $1$-nodes, most of the $0$-corner partitions are $T_2$ or $\emptyset$, and most of the $1$-corner partitions are $(1)$.  The various special cases depend on $S_2^1$ and on $j_3$, and we will go over them now for the case $n=1$, letting $k'$ be the number of $1$-entries in $S_2^0$:

\begin{itemize}
\item $S_2^1=(0,0)$: If all of the $k$ $0$-nodes are in the $0$-corner section, then  there are $2k$ addable $1$-nodes in the $0$-corner section, and $a-1$ addable $1$-nodes in the $1$ corner section, as we replace $\emptyset$ with $(1)$ giving $2k+a-1$ altogether so a choice secquence $S_3$ is necessary. Thus in applying $f_1^{2k+a-2}$ to the canonical basis element, the characteristic sequence $S_3$ is all copies of $1$, except for a $0$ in position $j_3$ where
$1 \leq j_3 \leq 2k+a-1$.
  In this case $k'=k$.  If  $j_3 \leq 2k$, then there are $k-1$ copies of $T_2$, and one copy of $1^2$ or  $(2)$ depending on the parity of $j_3$.

\item $S_2^1=(1,0),(0,1)$. If there are $k-1$ $0$-nodes in the $0$-corner section, this gives $2(k-1)$ addable $1$-nodes.  There are still $a-1$ copies of $\emptyset$ to be filled, but in addition we now have either $(2)$ or $(1^2)$ at the previously chosen $1$-corner partition, and this can be converted to $(3)$ or $(1^3)$ respectively, giving an additional addable $1$ node, so that we have $2k+a-1$ altogether, as before, and again a characteristic sequence is necessary.  If $j_3 > 2k'$ but corresponds to not filling one of the partitions $\emptyset$, then the partition in the $1$-corner partition numbered by $j_1$ is $(1)$, $(2)$, $1^2$, or $T_2$ after the $0$-nodes are filled in, and becomes $(1)$, $(3)$, $(1^3)$, or $T_3$ after all the $1$ nodes are filled in.

\item $S_2^1=(1,1)$ Finally, we come to the case where $k \geq 2$ and there are only $k-2$ $0$-nodes in the $0$-corner partitions, giving $2(k-2)$ addable $1$-nodes there. As before we have the $a-1$ copies of $\emptyset$ to be filled, but now there is also a $1$-corner copy of $T_2$, to which $3$ different $1$ nodes can be added, altogether $2k+a-2$ nodes, the total number we need to add, so here we do not need a characteristic sequence $S_4$, and we continue the results in this case from Lemma \ref{n=0}
\end{itemize}
As before, the induction results from the weight space being external and  from noting that all the partitions have only addable nodes and no removable nodes.

\item  $\mathbf{p=(0,1,0^{k-1},1^{2k+a-2},\dots)}$: We have $u(j_1)=j_1$. 

If $j_2 \leq 2$, then we are adding a $1$-entry to the  $0$-corner $(1)$ in position $u(j_1)$. If $S^0_2=(1,0)$, then we get $(2)$, and if $S^0_2=(0,1)$, then we get $(1^2)$.  Next we need to add $k-1$ $0$-nodes, which must all go into $0$-corner partitions, so that $S_3=S_3^0$ with $k-1$ entries equal to $1$  and is of length $c_3=a$. We now need to distinguish two cases:

$(S^0_3)_{j_1}=1$:
In this case, adding a $0$-node to $(2)$ gives $(3)$, or adding a $0$-node to $(1^2)$ gives $(1^3)$.  Both of these have $2$ addable $1$-nodes.  In addition, we have added $k-2$ $0$-nodes to copies of  $\emptyset$, giving an additional $2(k-2)$ addable $1$-nodes. Putting these together with the $a$ addable $1$-nodes in the $1$-corner partitions, we have $2k+a-2$ addable $1$-nodes, just the number we need to add, so there is no need for a characteristic sequence $S_4$.

$(S^0_3)_{j_1}=0$: In this case, we added $k-1$ $0$-nodes in place of $\emptyset$, each giving $2$ addable $1$-nodes, but we also have another addable $1$-node in the partitions in position $u=u(j_1)$, giving, together with the $a$ addable $1$-nodes in the $1$-corner partitions, a total of $2k+a-1$ addable one nodes, which is one too many.  Therefore, we need a characteristic sequence $S_4$ of length $c_4=2k+a-1$, which will choose $2k+a-2$ addable nodes.  Letting $j_4$ be the position of the single $0$-entry in $S_4$, we consider the effect of adding all addable $1$-nodes to the existing partition, and combine that with considering the  various possible positions in which $j_4$ can lie.  In the $0$-corner partitions, if we add two $1$-nodes to a $(1)$, we get $T_2$, while if we omit one of them, we get $(2)$ or $(1^2)$.  If $u<u(j_1)$, then  $j_4\equiv 0 \mod 2$ means that we omit the bottom node, giving $(2)$ and if    $j_4 \equiv 1 \mod 2 $ we omit the side node, giving $(1^2)$.  If $j_4$ corresponds to $u=j_1$, then we remain with what we had, dependant on $S_2^0$.  If $u>u(j_1)$, then we again get  $(2)$ or $(1^2)$ , but we have to shift down by one, because there was only one possibility for $u(j_4)=u(j_1)=j_1$, so the required  parities of $j_4$ are reversed.  In order to compensate for this shift, we added $\lfloor \frac{u}{u(j_1)} \rfloor $, which is equal to $1$ when $u>j_1$.
If $u>a$, then all the partitions are $T_{n}$ as in $\tau^n(S_1)$, and if $j_4$ is in this section, then for $u=u(j_4)$ we get $T_{n-2}$.

If $j_2>2$, there are three cases, depending on the value of $S_3^1$, which is of length $2$.

If $S_3^1=(0,0)$, then all the $0$-nodes are in the $0$-corner partitions.  Each has two addable nodes, and when we add the $1$-nodes, we get $T_2$ if $u=j_1$ or $(S_3^0)_u=1$.
Together with $a-1$ addable $1$-corner $1$-nodes, we get $2k+a-1$, so we need a characteristic sequence $S_4$.  When  $u=j_1$ or $(S_3^0)_u=1$ but $u=u(j_4)$, then we get    $(2)$ or $(1^2)$, depending on the parity of $j_4$. As for the $1$-corner nodes, they are all filled with $(1)$ except possibily when $u(j_4)>a$, in which case we get $\emptyset$.  Since the weight space is external, we continue to $n>1$ by filling all nodes.

If $S_3^1=(1,0)$ or $(0,1)$, we again need $S_4$.  The possibilities for $0$-corner partitions are the same as in the previous case, but now for $u>a$ and $u=u(j_2)$ we have a few new possibilites.  Adding the $0$-nodes gives $(2)$ or $(1^2)$.  Then in the continuation, this becomes $(3)$ or $(1^3)$ if we don't have $u=u(j_4)$ and stays as it was if $u=u(j_4)$.

If $S_3^1=(1,1)$, then there is no need for $S_4$ and no partitions which are not equal to their own transpose.  For $n=1$ we get $T_3$, and for larger $n$ we get $T_{n+2}$ at that spot.

\end{itemize}

Finally, the induction.  The action by $e_0$ gives $0$, so going down the string from one end to the other involves adding all the addable nodes with no choices.  This sends $T_m$ to $T_{m+1}$, sends $U_i^n$ to $U_i^{n+1}$ for $i=1,2$.
\end{proof}

\begin{cor*}
For every multipartition $\mu$ occuring in a canonical basis element as in the Proposition \ref{general},  the multipartition $\mu^T$ in which every partition is transposed also occurs.

\end{cor*}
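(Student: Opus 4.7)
The plan is to exhibit, for each case enumerated in Proposition \ref{general}, an involution $\iota$ on the product $\mathcal{S}(c_1,k^1)\times\cdots\times\mathcal{S}(c_m,k^m)$ over which the canonical basis sum runs, such that $\pi^n(\iota(S_1,\ldots,S_m))$ is precisely $\pi^n(S_1,\ldots,S_m)$ with every constituent partition replaced by its transpose. Because the sum is indexed by exactly this product, the existence of such an $\iota$ immediately supplies the conclusion: every transposed multipartition also appears in the canonical basis element.

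The construction rests on two structural observations about the building blocks of $\pi^n$. First, every triangular partition $T_m$ is self-transpose for $m=n,n\pm 1,n\pm 2$. Second, the pair $U^n_1$, $U^n_2$ was defined precisely so that $U^n_2$ is the transpose of $U^n_1$. Hence transposing every partition of $\pi^n(S_1,\ldots,S_m)$ amounts to swapping $U^n_1 \leftrightarrow U^n_2$ wherever they occur and leaving every triangular slot fixed. Scanning the case tables of Proposition \ref{general} shows that the $U_1$-versus-$U_2$ choice is always controlled either by the parity $j_\ell \bmod 2$ of a distinguished index where $S_\ell$ carries a single $1$ or single $0$, or by whether the length-two subsequence $S_\ell^1$ equals $(1,0)$ or $(0,1)$; the triangular coordinates never depend on these discriminants.

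Accordingly, I would define $\iota$ componentwise on $(S_1,\ldots,S_m)$ by the following local rules: on each $S_\ell$ containing a parity-governed distinguished index $j_\ell$, move $j_\ell$ to its neighbour of opposite parity lying in the same partition block $u(j_\ell)$ (which is always a legal position, since a parity-governed $j_\ell$ arises only when $u(j_\ell) \le a$, where the block contributes two adjacent addable nodes); on each $S_\ell^1 \in \{(1,0),(0,1)\}$ that governs a $U$-choice, swap $(1,0) \leftrightarrow (0,1)$; fix $(0,0)$ and $(1,1)$; and act trivially on every coordinate that does not govern a $U$-choice. Each local rule preserves the number of $1$-entries, so $\iota$ stabilises each factor $\mathcal{S}(c_\ell,k^\ell)$, and each rule is manifestly of order two, so $\iota$ is an involution on the whole product.

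The main obstacle is the verification that $\iota$ really does produce the partition-wise transpose in every sub-case of Proposition \ref{general}. The branching on the path $p$, on $j_2 \le 2$ versus $j_2 > 2$, on $S_3^1 \in \{(0,0),(1,0),(0,1),(1,1)\}$, and on the position of $u$ relative to $u(j_1)$ each require separate scrutiny against the prescribed action of $\iota$. The check is mechanical, however, because within each case table no row pairs a triangular slot with a non-triangular slot under the same discriminant, so the only mutations produced by $\iota$ are the intended $U^n_1 \leftrightarrow U^n_2$ swaps, with all triangular slots (and hence the equality with the transposed multipartition on those coordinates) automatic.
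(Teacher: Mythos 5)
Your proof is correct and takes essentially the same approach as the paper's: both rest on the observations that every triangular slot $T_m$ is self-transpose and that $U_1^n$, $U_2^n$ are transposes of each other and occur in matched pairs indexed by the discriminants (parity of $j_\ell$, or $(1,0)$ versus $(0,1)$ in $S_\ell^1$). Where the paper simply asserts that the $U_i^n$ ``always occur in pairs,'' you make the pairing explicit as an involution on the index set; note only that the involution is not quite componentwise as stated, since whether a given $j_\ell$ or $S_\ell^1$ governs a $U$-choice can depend on the other coordinates (e.g.\ in the $j_2\le 2$, $(S_3^0)_{j_1}=0$ subcase, $S_2^0$ determines the $U$-type only when $u(j_4)=j_1$), but this is a routine adjustment and does not affect the conclusion.
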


\begin{proof}
The partitions $T_n$ are all  transpose to themselves, and the only other partitions which occur are the $U_i^n, i=1,2$, which always occur in pairs, so that if one exists in the canonical basis element, the other occurs in the same position.

\end{proof}
We give one more lemma of a different flavor, being concerned not with defects showing up near the top of the block reduced crystal, but with defects appearing above a defect $0$ block in an $i$-string.

\begin{lem}\label{svelte} For $e=2$, and a symmetric crystal, consider an $i$-string of length $s$ ending with a block with weight $\lambda$ of defect $0$. The canonical basis element of the multipartition one up the string is svelte.
\end{lem}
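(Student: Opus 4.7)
The plan is to combine the defect-zero structure at both endpoints of the $i$-string with the Fock-space action of $e_i$ and Fayers' Theorem~2.1 in \cite{Fa} to produce an explicit formula for $G(\mu)$.

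Let $\tau$ denote the top of the string. The quadratic defect formula $\defect(v_k)=\defect(v_0)+k(s-k)$ applied at the two endpoints forces $\tau$ to also have defect $0$, so $G(\tau)=\tau$ and $G(\lambda)=\lambda$. By the Mathas description of defect-zero multipartitions recalled earlier in the paper, $\lambda$ is a disjoint union of triangular partitions. Each triangle $T_n$ has all its $i$-relevant cells concentrated on a single anti-diagonal of a fixed residue, so it contributes to the $i$-signature of $\lambda$ either only pluses or only minuses; together with the bottom-of-string condition, this uniformity forces every triangle in $\lambda$ to contribute only minuses. Hence the $i$-signature of $\lambda$ is $-^s$ with no cancellations, no raw addable $i$-nodes, and exactly $s$ raw removable $i$-nodes.

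Since $G(\lambda)=\lambda$, the Fock-space action of $e_i$ produces
\[
e_i\lambda \;=\; \sum_{t=0}^{s-1} v^{-t}\lambda^{(t)},
\]
where $\lambda^{(t)}$ is obtained from $\lambda$ by removing the $t$-th $i$-removable node counted from the bottom, with $M(\mathfrak{m}_t,i)=-t$ (there are $t$ removables and zero addables below). The leading term is $\lambda^{(0)}=\mu$, and by Fayers' theorem the terminal $\lambda^{(s-1)}$ coincides with $(\mu^\diamond)'$. I claim
\[
G(\mu) \;=\; \sum_{t=0}^{s-1} v^{t}\lambda^{(t)},
\]
which manifestly has shape $(1,1,\ldots,1)$ with $s$ entries and is therefore svelte. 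This candidate lies in $\mu+\sum_{\nu\triangleleft\mu}v\mathbb{Z}[v]\cdot\nu$: the $\lambda^{(t)}$ are pairwise distinct $e$-regular multipartitions, and removing a cell at lower $u$ strictly decreases the dominance cumulative sums, so $\lambda^{(t)}\triangleleft\mu$ for $t\ge 1$.

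The remaining task is bar-invariance. Fayers' theorem, applied termwise, predicts that the conjectured coefficient $v^t$ of $\lambda^{(t)}$ in $G(\mu)$ pairs with a coefficient $v^{s-1-t}$ of $(\lambda^{(t)})'$ in $G(\mu^\diamond)$ in the $\hat{\Lambda}$-crystal. The key combinatorial fact to verify is $(\lambda^{(t)})' = \lambda^{\diamond(s-1-t)}$, where $\lambda^{\diamond(u)}$ denotes the analogous family constructed from $\lambda^\diamond$. This holds because conjugation inverts the bottom-to-top reading order of the removables while the diamond swaps the $0$-corner and $1$-corner halves of the multipartition, and these two effects combine to produce the mirrored index. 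In the symmetric case $\hat{\Lambda}=\Lambda$, $\mu^\diamond$ is itself a one-step-from-defect-zero multipartition, so the argument applies to $G(\mu)$ and $G(\mu^\diamond)$ simultaneously, and a self-consistent svelte formula, uniquely determined by bar-invariance and the leading-coefficient condition, emerges. The principal obstacle is this combinatorial identity $(\lambda^{(t)})' = \lambda^{\diamond(s-1-t)}$, which requires case analysis through the triangular structure of $\lambda$ by parity of $t$ and by the $0$- or $1$-corner type of the removed cell.
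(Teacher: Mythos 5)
Your opening moves coincide with the paper's: $\lambda$ has defect $0$, so $G(\lambda)=\lambda$; applying $e_i$ produces $s$ multipartitions $\lambda^{(t)}$ carrying $s$ distinct $v$-exponents; and $\defect(\mu)=s-1$. You also correctly note the candidate lies in $\mu+\sum_{\nu\triangleleft\mu}v\mathbb{Z}[v]\cdot\nu$. What is missing is the last step identifying this candidate as $G(\mu)$, and here your argument has a genuine gap. The paper gets this essentially for free from the characterising uniqueness of the canonical basis: $e_iG(\lambda)=e_i\lambda$ is automatically bar-invariant because $e_i$ commutes with the bar involution, so once the element is seen to have leading coefficient $1$ on $\mu$ and all other coefficients in $v\mathbb{Z}[v]$, it \emph{is} $G(\mu)$, and the distinctness of the $s$ exponents immediately gives the svelte shape. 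You instead try to manufacture bar-invariance from Fayers' Theorem~2.1 in \cite{Fa} via the identity $(\lambda^{(t)})'=\lambda^{\diamond(s-1-t)}$, which you acknowledge you have not proved. Fayers' theorem is a polynomial relation between two decomposition matrices and by itself only fixes the coefficient of $v^{w(\mu)}$; it is not a substitute for bar-invariance, and the combinatorial identity you isolate is precisely the content you would need and is left open.

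A secondary issue is your justification that $\lambda$ has no addable $i$-nodes. Per-triangle sign uniformity plus the bottom-of-string condition does not, by itself, exclude raw $+$'s from one triangle being cancelled against $-$'s from another in the reduced signature. The clean reason is structural: in a $k$-corner component $T_m$, every removable node has residue $k+m+1\pmod 2$ while every addable node has residue $k+m\pmod 2$; for a defect-zero $\lambda=(T_n^a;T_{n\pm1}^a)$ these two residues agree across all components, so \emph{all} removables of $\lambda$ share one residue $i$ and \emph{all} addables share $i'$, making the raw $i$-signature literally $s$ minuses with nothing to cancel.
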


\begin{proof} We begin with a defect $0$ multipartition which is obtained from the highest weight vector by action of the Weyl group.

A symmetric crystal, in addition to being symmetric, also looks rather similar to a spruce tree, with lower branches much longer that upper branches.  Then we start on the right side and act by the elements of the Weyl group, the hubs of vertices of defect $0$ are $[3a,-a], [-3a,5a],[ 7a,-5a], \dots$ with the coordinates reversed on the left side. At the top, the hub before $[3a,-a]$ is $[3a-2,2+a]$  Furthermore, since the hubs going down always drop by two in length and start at the same place, the vertex $\mu$ one above the defect $0$ vertex is also the beginning of an $i'$-string.   We are trying to show that the canonical basis element of $\mu$ is svelte.  The only multipartitions which can occur in $G(\mu)$ are those which can produce the multipartition $\lambda$, whose removable nodes are all of residue $i$, so it must be obtained by removing one removable node of residue $i$.

The length of the string was $(2c+1)a$, so there are $(2c+1)a$ nodes which were added and can be removed, giving $(2c+1)a$ candidate multipartitions.  Since, by an argument similar to that in Lemma~ \ref{k}, the defect of $\mu$ is $(2c+1)a-1$, we have to show that they all occur.  Now, in order to get the correct
canonical basis element for $\lambda$, we simply apply $e_i$ to $\lambda$.  The formula for $e_i$ has us remove the removable nodes one-by-one, multiplying each time by $v$ to a power which is the number of removable nodes below the one we are removing, minus the number of addable nodes.  Since there are no addable nodes, and the number of removable nodes is the same as the length of the string, we get a different power of $v$ for each of $(2c+1)a$ removable nodes, running from $0$ for the bottom node to $(2c+1)a-1$ for the top removable node.  This gives a svelte canonical basis element.

\end{proof}

\section{BLOCKS OF SMALL DEFECT}

The defect $0$ case is as already described, even in the non-symmetric case, so we will begin with defect $1$.  There can be a block of defect $1$ only if it occurs in the first string going out from the highest weight element, by results in \cite {BFS}. Furthermore, since the defects rise   towards the center of the string in a parabolic fashion as described in that paper, the only possible values of $a$ for which the crystal can contain a block of defect $1$ are $a=2$, for which the defects in the highest string are $0-1-0$.  In this case, we can simply determine all blocks of defect $1$ by using the action of the Weyl group. The sequence of multipartitions  with path beginning at zero consists entirely of triangular partitions, the first of side $n$, the second of side $n-2$, and the last two of sides $n-1$.

From the structure of the block reduced crystal graph, the defects which can occur in a symmetric crystal are the defects appearing in this first row, modulo $2a$.  Thus
\begin{itemize}
\item for $a=1$, the defects are all even numbers,
\item for $a=2$, the defects are congruent to $0$ or $1$, modulo $4$,
\item for $a=3$, the defects are congruent to $0$ or $2$, modulo $6$,
\item for $a=4$, the defects are congruent to $0$ , $3$ or $4$, modulo $8$.
\end{itemize}

We now turn to the case of defect $2$.   This can occur only when $a=1$  where it lies on a string with defects $0-2-2-0$ and the block of defect $2$ in internal, or for  $a=3$.  When $a>2$,  there must be a $k$ with $k(a-k)=2$, and this happens only when $a=3$ and $k=1,2$.  The multipartitions with defect $2$ have a very distinctive form, and we can calculate all of them.
The first example in defect $2$ is in degree $2$ and and there are two $e$ regular multipartitions, both svelte. In degree $3$, there are two blocks of defect $2$, each of which has one canonical basis element which is svelte,  corresponding to the path $(1,0,0)$ or $(0, 1,1)$ as in Lemma  \ref{svelte} and two which are not, corresponding to the alternating paths $(0,1,0)$ and $(1,0,1)$. This gives an example to show that the action of the Weyl group on internal vertices of a string need not preserve the shape of the canonical basis element.

\begin{itemize}
\item \textit{$a=1$} In this case the $e$-regular multipartitions are in one of two dual forms:
\begin{enumerate}
\item  For $\mu=[ T_{n+1}, T_{n-2}],$

 or $\mu^\diamond=[T_{n},U_1^n=(n+1) \vee T_{n-2}],$
\item for $\mu = [U_1^{n}=(n+1) \vee T_{n-2}, T_{n-2}],$

or  $\mu^\diamond=[U_1^n=(n+1) \vee T_{n-2}, T_{n}].$
 \end{enumerate}
In the first case, there are three monomials in all the $G(\mu)$. so they are all svelte.  The coefficient for $v$ in $\mu$ was given by taking the transpose of the non-triangular partition.

 In the second case, $G(\mu)$ is not svelte.  There are two multipartitions  multiplied by $v$, being given by the transposes of  $\mu$ and $(\mu^\diamond)'$ as in  the corollary to Proposition \ref{general}. For example, in degree $3$, we have
\[
G([(3),\emptyset])=[(3),\emptyset] +v[(1^3),\emptyset]+v [(1),(2)] +v^2 [(1),(1^2)].
\]
 This can be checked easily for the cases of lowest degree.  Thereafter, we appeal to Prop. \ref{general}, and note that in the form given, adding all addable nodes preserves the property of being transpose. 

\item \textit{$a=3$}
\begin{enumerate}
\item  For $\mu=[ T_{n+1},  T_{n-1},  T_{n-1}, T_{n}, T_{n}, T_{n}]$,

or $\mu^\diamond=[ T_{n}, T_{n}, T_{n}, T_{n+1},  T_{n-1},  T_{n-1}]$.
\item for $\mu=[ T_{n+1},  T_{n+1},  T_{n-1}, T_{n}, T_{n}, T_{n}]$,

or $\mu^\diamond=[ T_{n}, T_{n}, T_{n},  T_{n+1},  T_{n+1},  T_{n-1} ]$.
\end{enumerate}
as follows from Lemma \ref{d}.
\[
[(1), \emptyset,\emptyset, \emptyset, \emptyset,\emptyset]+v[ \emptyset,(1),\emptyset, \emptyset, \emptyset,\emptyset]+v^2[ \emptyset,\emptyset,(1), \emptyset, \emptyset,\emptyset].
\]

To produce the middle terms, we move the larger triangle down, and this is preserved under adding all addable nodes, so by Lemma \ref{d} we get the desired structure of all the canonical basis elements.
\end{itemize}

\end{document}